\documentclass[12pt]{article}
\usepackage[a4paper,top=25mm,bottom=25mm,inner=30mm,outer=20mm]{geometry}
\usepackage[svgnames,x11names]{xcolor}
\usepackage{amsfonts,amsmath,amsthm,boldline,colortbl,comment,enumerate,float,graphicx,import,makecell,mathtools,multirow,pdflscape,subcaption,tikz}
\mathtoolsset{centercolon}
\usetikzlibrary{through,intersections,calc}
\usepackage{hyperref}

\newtheorem{theorem}{Theorem}
\newtheorem{lemma}[theorem]{Lemma}

\theoremstyle{definition}

\newtheorem{definition}[theorem]{Definition}

\makeatletter
\newcommand{\gauss}[2]{\genfrac{[}{]}{0pt}{}{#1}{#2}\@ifnextchar\bgroup{_}{}}

\newcommand{\rref}[2]{\hyperref[#2]{{#1}~\ref*{#2}}}

\definecolor{trivial}{RGB}{136,136,136}
\definecolor{chang}{RGB}{51,34,136}
\definecolor{liu}{RGB}{136,204,238}
\definecolor{koolen}{RGB}{68,170,153}
\definecolor{spence}{RGB}{17,119,51}
\definecolor{ramezani}{RGB}{153,153,51}
\definecolor{cioba}{RGB}{221,204,119}
\definecolor{ihringer}{RGB}{170,68,153}
\definecolor{new}{RGB}{136,34,85}
\definecolor{sporadic}{RGB}{204,102,119}

\newcommand{\soften}{70}
\colorlet{trivial}{trivial!\soften}
\colorlet{chang}{chang!\soften}
\colorlet{liu}{liu!\soften}
\colorlet{koolen}{koolen!\soften}
\colorlet{spence}{spence!\soften}
\colorlet{ramezani}{ramezani!\soften}
\colorlet{cioba}{cioba!\soften}
\colorlet{ihringer}{ihringer!\soften}
\colorlet{new}{new!\soften}
\colorlet{sporadic}{sporadic!\soften}



\title{Cospectral mates for generalized \\Johnson and Grassmann graphs}

\author{Aida Abiad\thanks{\texttt{a.abiad.monge@tue.nl},  Department of Mathematics and Computer Science, Eindhoven University of Technology, The Netherlands\\ Department of Mathematics: Analysis, Logic and Discrete Mathematics, Ghent University, Belgium\\ Department of Mathematics and Data Science, Vrije Universiteit Brussel, Belgium}\quad Jozefien D'haeseleer\thanks{\texttt{Jozefien.Dhaeseleer@UGent.be}, Department of Mathematics: Analysis, Logic and Discrete Mathematics, Ghent University, Belgium} \ Willem H. Haemers\thanks{\texttt{haemers@uvt.nl}, Department of Econometrics and Operations Research, Tilburg University, The Netherlands}\quad Robin Simoens\thanks{\texttt{Robin.Simoens@UGent.be}, Department of Mathematics: Analysis, Logic and Discrete Mathematics, Ghent University, Belgium}}

\date{}


\begin{document}

\maketitle

\begin{abstract}

We provide three infinite families of 
graphs in the Johnson and Grassmann schemes that are not uniquely determined by their spectrum. We do so by constructing graphs that are cospectral but non-isomorphic to these graphs.\\

\noindent\textbf{MSC Subject:} 05C50, 15A18\\
\noindent \textbf{Keywords:} Graph, Eigenvalues, Determined by spectrum, Switching

\end{abstract}

\section{Introduction}

A major problem in the field of spectral graph theory is to decide whether a given graph is determined by its spectrum (the eigenvalues of its adjacency matrix). This problem has been solved for many families of graphs; sometimes by proving that the spectrum determines the graph, and sometimes by constructing cospectral mates (non-isomorphic graphs with the same spectrum).
Although a conjecture by the third author states that almost all graphs are determined by their spectrum, many well-known graphs seem to have cospectral mates. This is, in particular, the case for several graphs in the Johnson and Grassmann association schemes,  see for instance \cite{cioba,spence,koolen, ramezani,ihringer}.
We contribute to this line of research by constructing cospectral mates for a number of open cases in the Johnson and Grassmann schemes. For this, we use switching techniques due to Godsil and McKay \cite{gm} and Wang, Qiu and Hu \cite{wqh}.

This paper is structured as follows. In \rref{Section}{sec:preliminaries}, we introduce the necessary definitions. \rref{Section}{sec:previousresults} contains an overview of previous results on the cospectrality of generalized Johnson and Grassmann graphs.
In \rref{Section}{sec:j2n4}, we 
prove that the generalized Johnson graph \(J_{\{2\}}(n,4)\), \(n\geq8\), is not determined by its spectrum, thereby solving the second open problem of \cite{cioba}. In \rref{Section}{sec:j2kk}, we 
show that the generalized Johnson graph \(J_{\{1,2,\dots\frac{k-1}{2}\}}(2k,k)\) with \(k\geq5\), \(k\) odd, is not determined by its spectrum. In \rref{Section}{sec:k2nk}, we 
prove that the \(q\)-Kneser graph \(K_q(n,k)\) is not determined by its spectrum if \(q=2\).
In \rref{Section}{sec:computational} we provide three specific generalized Johnson graphs that are not determined by their spectrum and pose some open questions. Finally, the \hyperref[appendix]{Appendix} contains an overview of all cospectrality results for small graphs in the aforementioned schemes.

\section{Preliminaries}\label{sec:preliminaries}

Let \(\Gamma=(V,E)\) denote a simple loopless graph.
The \emph{spectrum} of \(\Gamma\) is the multiset of eigenvalues of the adjacency matrix of \(\Gamma\). Graphs are \emph{cospectral} if they have the same spectrum. Two graphs that are cospectral but not isomorphic, are called \emph{cospectral mates}. A graph is \emph{determined by its spectrum} (DS) if it has no cospectral mate. Otherwise, it is not determined by its spectrum (NDS).


One of the most well-known tools for constructing cospectral graphs was introduced by Godsil and McKay \cite{gm}.

\begin{theorem}[GM-switching \cite{gm}]\label{thm:gmgeneral}
    Let \(\Gamma\) be a graph and \(\{C_1,C_2,\dots C_k,D\}\) a partition of \(V(\Gamma)\) such that the following hold for all \(i,j\in\{1,2,\dots,k\}\):
    \begin{enumerate}[(i)]
        \item Any two vertices of \(C_i\) have the same number of neighbours in \(C_j\).
        \item Every vertex in \(D\) has exactly \(0\), \(\frac{1}{2}|C_i|\) or \(|C_i|\) neighbours in \(C_i\). 
    \end{enumerate}
    For all \(i\in\{1,2,\dots,k\}\), every \(u\in C_i\) and every \(v\in D\) that has exactly \(\frac{1}{2}|C_i|\) neighbours in \(C_i\), reverse the adjacency between \(u\) and \(v\). The resulting graph is cospectral with \(\Gamma\). We say that it is obtained from \(\Gamma\) by \emph{GM-switching} with respect to \(C_1,C_2,\dots C_k\).
\end{theorem}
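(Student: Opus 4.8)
The plan is to show that GM-switching preserves the spectrum by exhibiting an orthogonal matrix $Q$ that conjugates the adjacency matrix $A$ of $\Gamma$ into the adjacency matrix $A'$ of the switched graph. First I would order the vertices so that $A$ decomposes into blocks according to the partition $\{C_1,\dots,C_k,D\}$, writing
\[
A=\begin{pmatrix} A_{11} & \cdots & A_{1k} & B_1 \\ \vdots & \ddots & \vdots & \vdots \\ A_{k1} & \cdots & A_{kk} & B_k \\ B_1^\top & \cdots & B_k^\top & A_D \end{pmatrix},
\]
where $A_{ij}$ is the block recording adjacencies between $C_i$ and $C_j$, and $B_i$ records adjacencies between $C_i$ and $D$. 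The key point is to introduce the normalized all-ones matrix $Q_i=\frac{2}{|C_i|}J-I$ of size $|C_i|$, which is symmetric, orthogonal (one checks $Q_i^2=I$), and fixes the all-ones vector while acting as $-I$ on its orthogonal complement. I would then set $Q=\operatorname{diag}(Q_1,\dots,Q_k,I_D)$, which is itself orthogonal, and compute $A'=Q^\top A Q=Q A Q$.

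The next step is to verify, block by block, that $QAQ$ is exactly the adjacency matrix of the switched graph. For the diagonal and off-diagonal $C$-blocks, I would use hypothesis (i): the condition that any two vertices of $C_i$ have the same number of neighbours in $C_j$ means each row sum of $A_{ij}$ is constant, so $J A_{ij}$ and $A_{ij}J$ behave predictably under conjugation, and one checks $Q_i A_{ij} Q_j = A_{ij}$; hence the internal adjacencies among the $C$-classes are unchanged, matching the switching rule which only reverses edges between the $C_i$ and $D$. For the $A_D$ block, conjugation by $I_D$ leaves it untouched, again consistent with the rule. The substantive blocks are the $B_i$ blocks: I would use hypothesis (ii), that each column of $B_i$ (corresponding to a vertex $v\in D$) is either all-zero, all-one, or has exactly half its entries equal to one. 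In the first two cases $Q_i$ fixes the column; in the third case $Q_i b = \frac{2}{|C_i|}J b - b = \mathbf{1}-b$, which flips every entry, precisely realizing the reversal of adjacencies prescribed by the switching operation.

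Once these block computations are assembled, $QAQ=A'$ follows, and since $Q$ is orthogonal, $A$ and $A'$ are similar and therefore cospectral, completing the proof. The main obstacle I anticipate is the bookkeeping in the mixed $C_i$--$C_j$ blocks: one must confirm that $Q_i A_{ij} Q_j = A_{ij}$ using only the constant-row-sum condition, which requires expanding $Q_i A_{ij} Q_j = (\tfrac{2}{|C_i|}J - I)A_{ij}(\tfrac{2}{|C_j|}J-I)$ and checking that the three extra terms involving $J$ cancel. This cancellation hinges on the fact that $J A_{ij}$ and $A_{ij} J$ are both constant multiples of $J$ (using (i) and, by symmetry, the analogous statement for $A_{ji}$), so the term $\tfrac{4}{|C_i||C_j|}J A_{ij} J$ exactly compensates the two linear terms; keeping the normalization constants straight is the fiddly part, but it is routine once the row-sum structure is made explicit.
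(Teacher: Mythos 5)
Your proposal is correct, and it is essentially the original Godsil--McKay argument: conjugation by the orthogonal involution $Q=\operatorname{diag}(Q_1,\dots,Q_k,I)$ with $Q_i=\frac{2}{|C_i|}J-I$, where condition (i) (applied to both $(i,j)$ and $(j,i)$, giving constant row \emph{and} column sums of $A_{ij}$ together with the double-count $|C_i|r_{ij}=|C_j|r_{ji}$) makes the $C_i$--$C_j$ blocks invariant, and condition (ii) makes $Q_i$ act on the $C_i$--$D$ columns as the prescribed switching. Note that the paper itself states this theorem without proof, citing Godsil and McKay \cite{gm}; your argument reproduces the proof from that reference, so there is no divergence to report.
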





Another switching technique to construct cospectral graphs was recently introduced by Wang, Qui and Hu \cite{wqh}.

\begin{theorem}[WQH-switching \cite{wqh}]\label{thm:wqh}
    Let \(\Gamma\) be a graph and \(C_1,C_2\) disjoint subsets of \(V(\Gamma)\) such that the following hold:
    \begin{enumerate}[(i)]
        \item \(|C_1|=|C_2|.\)
        \item There exists a constant \(c\) such that for all \(i,j\in\{1,2\}\), \(i\neq j\) and every vertex of \(C_i\), the number of neighbours in \(C_i\) minus the number of neighbours in \(C_j\) is equal to \(c\).
        \item Every vertex outside \(C_1\cup C_2\) has either:
        \begin{enumerate}
            \item \(|C_1|\) neighbours in \(C_1\) and \(0\) neighbours in \(C_2\),
            \item \(0\) neighbours in \(C_1\) and \(|C_2|\) neighbours in \(C_2\),
            \item the same number of neighbours in \(C_1\) and \(C_2\).
        \end{enumerate}
    \end{enumerate}
    For every \(u\in C_1\cup C_2\) and every \(v\notin C_1\cup C_2\) for which (a) or (b) holds, reverse the adjacency between \(u\) and \(v\). The resulting graph is cospectral with \(\Gamma\). We say that it is obtained from \(\Gamma\) by \emph{WQH-switching} with respect to \(C_1\) and \(C_2\).
\end{theorem}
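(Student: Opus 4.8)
The plan is to exhibit an orthogonal matrix \(Q\) for which \(Q^{-1}AQ\) is again a symmetric \(0/1\) matrix with zero diagonal, namely the adjacency matrix of the switched graph; since similar matrices share their spectrum, cospectrality follows immediately. Order the vertices so that the adjacency matrix \(A\) decomposes into blocks indexed by \(S:=C_1\cup C_2\) and its complement \(D\), and look for \(Q\) of the form \(Q_0\oplus I_D\), acting nontrivially only on \(S\). Writing \(m:=|C_1|=|C_2|\) and \(\mathbf{w}:=\begin{pmatrix}\mathbf{1}_{C_1}\\ -\mathbf{1}_{C_2}\end{pmatrix}\), I would take \(Q_0:=I-\tfrac1m\mathbf{w}\mathbf{w}^{\top}\), i.e.\ the Householder reflection in the hyperplane \(\mathbf{w}^{\perp}\) (note \(\mathbf{w}^{\top}\mathbf{w}=2m\), so this is correctly normalised). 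This \(Q_0\) is symmetric and orthogonal with \(Q_0^2=I\), hence \(Q=Q_0\oplus I_D\) is a symmetric orthogonal involution and \(Q^{-1}AQ=QAQ\).

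Conjugation leaves the \(D\)--\(D\) block untouched, sends the internal block \(A_{SS}\) to \(Q_0A_{SS}Q_0\), and sends the \(S\)--\(D\) block \(A_{SD}\) to \(Q_0A_{SD}\). The first key step is to show \(Q_0A_{SS}Q_0=A_{SS}\), so that no adjacencies inside \(S\) change. Here condition (ii) enters: rewriting it as \(A_{SS}\mathbf{w}=c\mathbf{w}\) shows that \(\mathbf{w}\) is an eigenvector of \(A_{SS}\). Setting \(P:=\tfrac1{2m}\mathbf{w}\mathbf{w}^{\top}\) (the orthogonal projection onto \(\mathbf{w}\)), so that \(Q_0=I-2P\), the eigenvector relation together with the symmetry of \(A_{SS}\) gives \(A_{SS}P=PA_{SS}=cP\); expanding \(Q_0A_{SS}Q_0=A_{SS}-2A_{SS}P-2PA_{SS}+4PA_{SS}P\) then collapses via \(-2c-2c+4c=0\) to exactly \(A_{SS}\). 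Thus the internal block, and in particular the zero diagonal on \(S\), is preserved.

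It remains to identify \(Q_0A_{SD}\) with the switched adjacencies. For \(v\in D\) let \(\mathbf{a}_v\in\{0,1\}^{2m}\) record its neighbours in \(S\); the \(v\)-th column of \(Q_0A_{SD}\) is \(Q_0\mathbf{a}_v=\mathbf{a}_v-\tfrac1m(\mathbf{w}^{\top}\mathbf{a}_v)\mathbf{w}\), where \(\mathbf{w}^{\top}\mathbf{a}_v\) is precisely the number of neighbours of \(v\) in \(C_1\) minus the number in \(C_2\). Condition (iii) controls this scalar in each case: in case (c) it equals \(0\) and the column is unchanged; in case (a) it equals \(m\), giving \(\begin{pmatrix}\mathbf{1}\\ \mathbf{0}\end{pmatrix}\mapsto\begin{pmatrix}\mathbf{0}\\ \mathbf{1}\end{pmatrix}\); in case (b) it equals \(-m\), giving \(\begin{pmatrix}\mathbf{0}\\ \mathbf{1}\end{pmatrix}\mapsto\begin{pmatrix}\mathbf{1}\\ \mathbf{0}\end{pmatrix}\). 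In every case \(Q_0\mathbf{a}_v\) is again a \(0/1\) vector and the change is exactly the reversal of adjacency prescribed in the theorem. Hence \(QAQ\) is the adjacency matrix of the switched graph, which is therefore cospectral with \(\Gamma\).

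I expect the only genuine subtlety to be the internal block: one must recognise condition (ii) as the assertion that \(\mathbf{w}\) is an eigenvector of \(A_{SS}\) and combine it with symmetry to obtain the clean cancellation above. The case analysis for (iii) is then routine bookkeeping, and the hypothesis \(|C_1|=|C_2|=m\) is exactly what makes the reflection interchange the two characteristic vectors \(\begin{pmatrix}\mathbf{1}\\ \mathbf{0}\end{pmatrix}\) and \(\begin{pmatrix}\mathbf{0}\\ \mathbf{1}\end{pmatrix}\) and keeps all entries in \(\{0,1\}\).
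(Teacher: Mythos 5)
Your proof is correct. The paper itself gives no proof of this statement (it is quoted from \cite{wqh}), but your argument --- conjugating \(A\) by the symmetric orthogonal involution that acts as the identity outside \(C_1\cup C_2\) and as the Householder reflection \(I-\tfrac1m\mathbf{w}\mathbf{w}^{\top}\), \(\mathbf{w}=\mathbf{1}_{C_1}-\mathbf{1}_{C_2}\), on \(C_1\cup C_2\) --- is essentially the original proof of Wang, Qiu and Hu, who use exactly this matrix written in block form \(\begin{pmatrix} I-\tfrac1m J & \tfrac1m J\\ \tfrac1m J & I-\tfrac1m J\end{pmatrix}\); your identification of condition (ii) with the eigenvector relation \(A_{SS}\mathbf{w}=c\mathbf{w}\) and the case analysis of (iii) via the scalar \(\mathbf{w}^{\top}\mathbf{a}_v\in\{0,\pm m\}\) are both accurate and complete.
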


We should mention that a more general version of WQH-switching was introduced in \cite{QJW2020}.

Let \(n\), \(k\) be positive integers with \(k\leq n\). We now define the graphs of our interest, namely those in the Johnson and Grassmann schemes.

\begin{definition}\label{def:johnson}
    Let \(S\subseteq\{0,1,\dots,k-1\}\). The \emph{generalized Johnson graph \(J_S(n,k)\)} has as vertices the \(k\)-subsets of \(\{1,\dots,n\}\), where two vertices are adjacent if their intersection size is in \(S\).
\end{definition}

In particular,
\(J_{\{0\}}(n,k)\) is the \emph{Kneser graph} \(K(n,k)\) and \(J_{\{k-1\}}(n,k)\) is the \emph{Johnson graph} \(J(n,k)\).


If we replace sets by subspaces and sizes by dimensions in \rref{Definition}{def:johnson}, we obtain the definition of a generalized Grassmann graph. A vector space of dimension \(k\) is called a \emph{\(k\)-space} for short. Furthermore, let \(\mathbb{F}_q\) denote the finite field of order \(q\) (\(q\) is a prime power).

\begin{definition}
    Let \(S\subseteq\{0,1,\dots,k-1\}\). The \emph{generalized Grassmann graph \(J_{q,S}(n,k)\)} has as vertices the \(k\)-subspaces of \(\mathbb{F}_q^n\), where two vertices are adjacent if their intersection dimension is in \(S\).
\end{definition}

In particular, \(J_{q,\{0\}}(n,k)\) is the \emph{\(q\)-Kneser graph} \(K_q(n,k)\) and \(J_{q,\{k-1\}}(n,k)\) is the \emph{Grassmann graph} \(J_q(n,k)\).

    From now on, for all the aforementioned graphs, we assume that \(k\leq n/2\), since \(J_S(n,k)\) is isomorphic to \(J_{\{s+n-2k\mid s\in S\}}(n,n-k)\). 
    We also assume that \(|S|\leq k/2\) because the complement of \(J_S(n,k)\) is \(J_{\{1,\dots,n\}\setminus S}(n,k)\), and a regular graph is DS if and only if its complement is DS. 
    Note that these observations also hold for Grassmann graphs.
    Also note that if \(S=\emptyset\), the graph is complete or edgeless and therefore trivially DS. Therefore, we also assume that \(|S|\geq1\), and in particular, \(k\geq2\).

\section{Previous results}\label{sec:previousresults}
The following results about the cospectrality of generalized Johnson and Grassmann graphs have been obtained by various authors. We present them in a chronological order.

Let \(q\) be a prime power and \(1\leq k\leq n/2\).



\begin{theorem}[\cite{chang,hoffman}]\label{chang}
    \(K(n,2)\) is DS if and only if \(n\neq8\).
\end{theorem}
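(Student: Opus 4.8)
The plan is to reduce the statement to the classical characterization of the triangular graph $T(n)=J_{\{1\}}(n,2)=L(K_n)$ by its spectrum. Since $K(n,2)=J_{\{0\}}(n,2)$ is the complement of $J_{\{1\}}(n,2)=T(n)$, and a regular graph is DS if and only if its complement is DS, it suffices to prove that $T(n)$ is DS precisely when $n\neq 8$. First I would record the relevant data: $T(n)$ is strongly regular with parameters $\left(\binom n2,\,2(n-2),\,n-2,\,4\right)$, and its eigenvalues are $2(n-2)$, $n-4$ and $-2$, with multiplicities $1$, $n-1$ and $\binom n2-n$ respectively. The small cases $n\le 4$ are degenerate (the graph is a single vertex, edgeless, or a perfect matching) and are settled by hand, so I would assume $n\ge 5$ from here on.

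Next I would argue that every cospectral mate of $T(n)$ is itself strongly regular with the same parameters. Regularity follows from the spectrum, since a graph with $m$ edges satisfies $\lambda_{\max}\ge 2m/n$ with equality if and only if it is regular, and both $m=\tfrac12\sum_i\lambda_i^2$ and $\lambda_{\max}$ are spectral invariants. Because the largest eigenvalue $2(n-2)$ is simple, a regular cospectral mate is connected; and a connected regular graph with exactly three distinct eigenvalues is strongly regular, its parameters being determined by the eigenvalues. Hence any cospectral mate is a strongly regular graph with the parameters of $T(n)$, and in particular has smallest eigenvalue $-2$.

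The heart of the argument, and the step I expect to be the main obstacle, is the classification of strongly regular graphs with smallest eigenvalue $-2$. I would invoke the Cameron--Goethals--Seidel--Shult theorem (equivalently, the original combinatorial analysis of Chang and Hoffman): a connected graph with smallest eigenvalue $-2$ is a (generalized) line graph or one of finitely many exceptional graphs arising from the root system $E_8$. Matching parameters, for $n\ge 5$ the only possibilities are $T(n)$ itself together with, when $n=8$, exactly three exceptional graphs, the Chang graphs, which share the parameters $(28,12,6,4)$ of $T(8)$. For $n\neq 8$ the line-graph case applies, and Whitney's theorem (connected graphs with isomorphic line graphs are isomorphic, the sole exception being $K_3$ and $K_{1,3}$) forces the underlying graph to be $K_n$, so the mate is isomorphic to $T(n)$; this yields the DS direction. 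For $n=8$, the three Chang graphs are cospectral with $T(8)$ yet non-isomorphic to it, yielding the NDS direction. Complementing back gives the claim for $K(n,2)$. I expect essentially all the difficulty to be concentrated in the $E_8$-embedding step underlying the classification, which is precisely what makes $n=8$ exceptional.
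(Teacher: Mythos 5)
The paper does not actually prove this statement: Theorem~\ref{chang} is imported verbatim from the literature \cite{chang,hoffman}, so there is no internal proof to compare against; what you have written is a reconstruction of the classical argument, and in outline it is the standard one. The skeleton is sound: passing to the complement \(T(n)=L(K_n)\) is legitimate since a regular graph is DS exactly when its complement is (a fact the paper itself records in Section~\ref{sec:preliminaries}); the spectral invariants (edge count, simple largest eigenvalue, three distinct eigenvalues) force any cospectral mate to be a connected strongly regular graph with parameters \(\left(\binom{n}{2},2(n-2),n-2,4\right)\) and least eigenvalue \(-2\); and the classification of such graphs --- Chang and Hoffman's original combinatorial analysis, or equivalently Seidel's classification of strongly regular graphs with smallest eigenvalue \(-2\), which follows from the Cameron--Goethals--Seidel--Shult root-system theorem --- gives \(T(n)\) as the unique realization for \(n\neq8\), and exactly the three Chang graphs besides \(T(8)\) when \(n=8\). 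Deferring the heavy classification to those cited theorems is appropriate here, since the paper does exactly the same by citation.

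One step is misstated, however. Whitney's theorem cannot be invoked where you place it: it converts an \emph{isomorphism} \(L(H)\cong L(K_n)\) into \(H\cong K_n\), but at that point in your argument you only know the mate is a line graph \(L(H)\) that is strongly regular with the parameters of \(T(n)\); no isomorphism of line graphs is available yet. The missing (short, standard) argument recovers \(H\) from the parameters instead: \(2(n-2)\)-regularity of \(L(H)\) forces \(\deg u+\deg v=2n-2\) for every edge \(uv\) of \(H\), so \(H\) is \((n-1)\)-regular or bipartite semiregular; the bipartite case is excluded by the parameter \(\lambda=n-2\) (in a triangle-free root graph, two edges through a vertex \(u\) have exactly \(\deg u-2\) common neighbours in \(L(H)\), which cannot be constant when two distinct degrees occur), and then the edge count \(\binom{n}{2}\) forces \(H\) to be \((n-1)\)-regular on \(n\) vertices, i.e.\ \(H=K_n\). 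One must also discard the regular generalized line graphs that are not line graphs (the cocktail party graphs), which have \(\mu=2m-2\neq4\) once \(n\geq5\). Alternatively, citing Seidel's explicit list of strongly regular graphs with smallest eigenvalue \(-2\) bypasses this entire step by direct parameter matching. With that repair, your proof is correct.
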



\begin{theorem}[\cite{liu}]\label{liu}
    \(K(2k+1,k)\) is DS.
\end{theorem}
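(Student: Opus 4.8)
The plan is to show that any graph \(\Gamma'\) cospectral with \(K(2k+1,k)\) must in fact be isomorphic to it. Recall that \(K(2k+1,k)\) is the odd graph: it is \((k+1)\)-regular on \(N=\binom{2k+1}{k}\) vertices, it is triangle-free for \(k\ge 2\) (three pairwise disjoint \(k\)-sets would require \(3k\le 2k+1\) points), and its eigenvalues are \((-1)^i(k+1-i)\) for \(i=0,\dots,k\) with multiplicities \(\binom{2k+1}{i}-\binom{2k+1}{i-1}\). First I would read off the coarse invariants from the spectrum alone. The top eigenvalue \(k+1\) is simple and equals \(2|E|/N\), so \(\Gamma'\) is connected and \((k+1)\)-regular on \(N\) vertices. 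Because \(\tfrac16\sum_i m_i\lambda_i^3\) counts triangles and vanishes here, \(\Gamma'\) is triangle-free; and since for a regular graph the quantity \(\sum_i m_i\lambda_i^4\) together with the degree determines the number of quadrangles, \(\Gamma'\) has the same number of \(4\)-cycles as \(K(2k+1,k)\), namely none, as the odd graph has girth at least \(5\). Thus \(\Gamma'\) is a connected, \((k+1)\)-regular graph of girth \(\ge 5\) on \(N\) vertices with exactly \(k+1\) distinct eigenvalues, so its diameter is at most \(k\).

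The heart of the argument is to upgrade this local data to global distance-regularity with the intersection array of the odd graph. Here I would invoke the spectral excess theorem of Fiol and Garriga: for a connected regular graph with \(d+1\) distinct eigenvalues (so \(d=k\) here), the average number of vertices at distance \(d\) from a vertex is at most the spectral excess, a quantity determined by the spectrum, with equality precisely when the graph is distance-regular. Since the spectrum is fixed, this spectral excess equals the value attained by \(K(2k+1,k)\) itself, so the task reduces to proving that \(\Gamma'\) attains the bound rather than merely satisfying it. I would approach this by analysing the distance partition around a vertex: triangle-freeness and girth \(\ge 5\) fix the first intersection numbers, and then eigenvalue interlacing applied to the quotient matrix of the distance partition, combined with the fact that the feasible intersection arrays consistent with this spectrum form a very restricted set, should force the remaining \(b_i,c_i\) to agree with those of the odd graph at every vertex.

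Finally, having shown that \(\Gamma'\) is distance-regular with the same intersection array as \(K(2k+1,k)\), I would appeal to the classical characterization that the odd graph is the unique distance-regular graph with that intersection array. This yields \(\Gamma'\cong K(2k+1,k)\), and hence the graph is DS.

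I expect the genuine obstacle to be the middle step. Cospectrality does not imply distance-regularity in general, so the whole weight of the proof rests on exploiting the very specific multiplicities \(\binom{2k+1}{i}-\binom{2k+1}{i-1}\) together with the extremal (triangle-free, girth \(\ge 5\)) structure to exclude every non-distance-regular graph with this spectrum; verifying that the spectral-excess bound is actually met, and not merely respected, is where the real work lies.
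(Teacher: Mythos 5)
The paper offers no proof of this statement: it is quoted directly from Huang and Liu \cite{liu}, so the only meaningful comparison is with the proof in that reference. Your overall skeleton --- spectral invariants \(\Rightarrow\) structural constraints \(\Rightarrow\) distance-regularity with the intersection array of \(K(2k+1,k)\) \(\Rightarrow\) isomorphism via the known uniqueness of the odd graph among distance-regular graphs with those parameters (Moon's theorem) --- is exactly the skeleton of the known proof, and your first and last steps are sound. The problem is the middle step, which you yourself identify as the crux: as outlined, it has a genuine gap, and it is not one that more careful interlacing can close, because the invariant you extract from the spectrum is too weak in principle.

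Concretely, you derive only triangle-freeness and quadrangle-freeness, i.e.\ girth \(\geq 5\). But for \(k\geq 3\) the odd graph \(K(2k+1,k)\) has girth exactly \(6\), so girth \(6\) is the most any cospectral mate can be guaranteed. The criteria that convert ``few distinct eigenvalues plus large girth'' into distance-regularity (such as the Brouwer--Haemers result that a connected regular graph with \(d+1\) distinct eigenvalues and girth at least \(2d-1\) is distance-regular) therefore apply only when \(2d-1=2k-1\leq 6\), i.e.\ \(k\leq 3\); for \(k\geq 4\) no girth-based argument can succeed. Your fallback, interlacing on the distance partition together with ``feasibility of intersection arrays,'' does not repair this: the distance partition of \(\Gamma'\) is not known to be equitable, and reasoning about its intersection numbers as if it were is circular. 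The missing idea --- the one that powers both Huang--Liu's original proof and the later spectral-excess proof of van Dam and Fiol --- is to use the \emph{odd} closed-walk counts: since \(\sum_i m_i\lambda_i^{\ell}=0\) for every odd \(\ell<2k+1\) (as holds for the odd graph, whose odd girth is \(2k+1\)), any cospectral mate has no odd cycle of length less than \(2k+1\), i.e.\ odd girth \(\geq 2d+1\) where \(d+1=k+1\) is the number of distinct eigenvalues. It is this odd-girth hypothesis, not girth \(\geq 5\), that forces equality in the spectral excess bound and yields distance-regularity with the correct intersection array. If you replace your girth extraction by this odd-girth extraction, your outline becomes the standard proof; without it, the argument fails for every \(k\geq 4\).
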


\begin{theorem}[\cite{koolen}]\label{koolen}
    \(J_q(2k+1,k)\) is NDS.
\end{theorem}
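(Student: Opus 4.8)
The graph $J_q(2k+1,k)$ is the Grassmann graph whose vertices are the $k$-subspaces of $\mathbb{F}_q^{2k+1}$, two of them being adjacent precisely when they meet in a $(k-1)$-space (equivalently, when their sum is a $(k+1)$-space). The plan is to exhibit a cospectral mate by applying Godsil--McKay switching (\rref{Theorem}{thm:gmgeneral}) to a carefully chosen partition. Since regularity of a graph is determined by its spectrum, the switched graph is automatically regular of the same degree, and since the Grassmann graph is distance-regular, it suffices to show that the switched graph is \emph{not} distance-regular; in fact it will be enough to find two adjacent vertices in it whose number of common neighbours differs from the edge-regularity parameter $\lambda$ of $J_q(2k+1,k)$.

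First I would fix the geometric data on which the switching set is built: a point $p$ (a $1$-space) and a hyperplane $H$ (a $2k$-space) with $p\not\subseteq H$, together with the induced classification of the vertices according to the pair $(\dim(A\cap H),\,[\,p\in A\,])$. This yields three generic classes, namely the $k$-spaces contained in $H$, the $k$-spaces through $p$ meeting $H$ in a $(k-1)$-space, and the remaining $k$-spaces; the incidence numbers between these classes are uniform and can be written in terms of Gaussian binomial coefficients $\gauss{a}{b}$. The switching set $C$ (or a union $C_1\cup\dots\cup C_m$ of parts, each of size two so that condition (ii) of \rref{Theorem}{thm:gmgeneral} becomes vacuous) will be chosen inside these classes so that it induces a regular subgraph and so that every outside vertex has $0$, half, or all of its $C$-neighbours, matching the Grassmann adjacency counts to the admissible values $0,\tfrac{1}{2}|C|,|C|$.

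Next I would verify the two Godsil--McKay conditions. Condition (i), that any two vertices of a given part have the same number of neighbours in every part, reduces to a finite list of incidence counts among $k$-, $(k-1)$- and $(k+1)$-spaces relative to $p$ and $H$; these are standard $q$-binomial evaluations and I expect them to hold by the homogeneity of the construction. Condition (ii), the ``$0$, half, or all'' requirement, is the delicate algebraic point: one must check that for each outside vertex the number of its neighbours in $C$ collapses to exactly one of the three admissible values, which is where the choice of $C$ (and possibly a decomposition into size-two parts, making ``half'' trivially equal to $1$) has to be engineered. I would organise this verification by splitting the outside vertices according to their own $(\dim(A\cap H),\,[\,p\in A\,])$-type and counting within each type.

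The main obstacle is the non-isomorphism, not the cospectrality. Here the plan is to compute, in the switched graph, the number of common neighbours of a suitably chosen edge straddling a switched adjacency, and to show it differs from the constant $\lambda$ of $J_q(2k+1,k)$. Because $J_q(2k+1,k)$ is edge-regular (indeed distance-regular), any graph isomorphic to it must be edge-regular with the same $\lambda$, so producing a single edge that violates this certifies that the switched graph is a genuine cospectral mate, and hence that $J_q(2k+1,k)$ is NDS. As a fallback, if the chosen $C$ happens to preserve edge-regularity, one can instead distinguish the two graphs by a finer local invariant (for instance the number of induced quadrilaterals through an edge, or the failure of vertex-transitivity), but I expect the common-neighbour count of a switched edge to already settle the matter for all $q$ and all $k\geq2$.
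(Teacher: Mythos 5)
This theorem is not proved in the paper at all; it is quoted from van Dam and Koolen \cite{koolen}, whose argument constructs the \emph{twisted Grassmann graphs} directly: fixing a hyperplane $H$ of $\mathbb{F}_q^{2k+1}$, they build a new graph on the $(k+1)$-spaces not contained in $H$ together with the $(k-1)$-spaces of $H$, prove it is distance-regular with the \emph{same intersection array} as $J_q(2k+1,k)$ (hence cospectral), and establish non-isomorphism by showing the new graph is not vertex-transitive, whereas $J_q(2k+1,k)$ is. Your overall starting point is not hopeless: it was shown later (by Munemasa) that the twisted Grassmann graph can indeed be obtained from $J_q(2k+1,k)$ by GM-switching, with switching set the collection of all $k$-spaces contained in $H$. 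But your proposal has two genuine gaps.

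First, and fatally for your main line of attack: the cospectral mate produced by this switching is distance-regular with the same intersection array, so it is edge-regular with exactly the same parameter $\lambda$, and the number of common neighbours of adjacent vertices (and of vertices at distance two) agrees everywhere with that of $J_q(2k+1,k)$. Hence your primary non-isomorphism certificate --- an edge whose common-neighbour count differs from $\lambda$ --- cannot exist for this construction, contrary to your stated expectation that it ``settles the matter for all $q$ and all $k\geq2$.'' This is precisely the situation your one-sentence fallback is meant to handle, but that fallback (failure of vertex-transitivity, or a finer local invariant) is the actual mathematical core of the theorem, and you give no argument for it; in \cite{koolen} this is where the real work lies. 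Second, even the cospectrality half is not established: you never exhibit a concrete switching set, and the verification of both conditions of \rref{Theorem}{thm:gmgeneral} is deferred to ``standard $q$-binomial evaluations'' that you ``expect to hold by homogeneity.'' As it stands, the proposal is a plan whose decisive step is based on a false expectation, not a proof.
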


\begin{theorem}[\cite{spence}]\label{spence}
    \(J(n,k)\) and \(J_q(n,k)\) are NDS if \(k\geq 3\).
\end{theorem}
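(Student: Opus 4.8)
The plan is to exhibit, for every admissible \(n\) and \(k\), an explicit cospectral mate obtained by switching, using \rref{Theorem}{thm:gmgeneral} and \rref{Theorem}{thm:wqh}. Two preliminary remarks shape the whole argument. First, both \(J(n,k)\) and \(J_q(n,k)\) are vertex-transitive and in fact distance-regular, so any cospectral mate is again regular of the same degree; consequently the two graphs cannot be separated by any spectral or degree quantity, and non-isomorphism will have to be detected by a strictly local, non-spectral invariant. Second, in a distance-regular graph the number of common neighbours of two adjacent vertices is a constant \(\lambda\); switching typically destroys this, so the multiset of common-neighbour counts over all edges is the invariant I would invoke at the end.

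\textbf{Base case \(k=3\).} For the Johnson graph I would take the single switching class \(C=\{\,T\subseteq\{1,2,3,4\}:|T|=3\,\}\), the four \(3\)-subsets of a fixed \(4\)-set. Any two of them meet in size \(2\), so \(C\) induces a \(K_4\); in particular every vertex of \(C\) has the same number (\(3\)) of neighbours inside \(C\), which is condition (i) of \rref{Theorem}{thm:gmgeneral}. For an outside \(3\)-set \(X\) one checks directly that the number of neighbours of \(X\) in \(C\) depends only on \(e:=|X\cap\{1,2,3,4\}|\): it equals \(2\) when \(e=2\) (the two triples of \(\{1,2,3,4\}\) containing the pair \(X\cap\{1,2,3,4\}\)) and \(0\) when \(e\le 1\), while \(e=3\) would put \(X\) in \(C\). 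Hence every outside vertex has \(0\), \(2=\tfrac12|C|\) or \(4\) neighbours in \(C\), condition (ii) holds, and GM-switching yields a cospectral graph.

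\textbf{General \(k\) and the Grassmann graphs: the obstruction.} The tempting move is to localise the above set: fix a \((k-3)\)-set \(A\) and switch on \(C=\{\,A\cup T:|T|=3,\ T\subseteq\{a,b,c,d\}\,\}\), again a \(K_4\). This fails for \(k\ge 4\): a \(k\)-set \(X\) that omits exactly one element of \(A\) and contains exactly three of \(a,b,c,d\) has \emph{exactly one} neighbour in \(C\), violating the \(0/\tfrac12/1\) condition. The Grassmann analogue — switch on all \(3\)-spaces of a fixed \(4\)-space \(U\), a clique of size \(q^3+q^2+q+1\) — fails for a related reason: an outside \(3\)-space meeting \(U\) in dimension \(2\) has exactly \(q+1\) neighbours in \(C\), an intermediate value that is neither \(0\), half nor all, and for even \(q\) the clique even has odd size, so the ``exactly half'' requirement cannot be formulated. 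I would therefore pass to \rref{Theorem}{thm:wqh}, replacing the single clique by a pair \(C_1,C_2\) of disjoint isomorphic cliques in mirror-symmetric position, so that each ``partial'' outside vertex has equally many neighbours in \(C_1\) and in \(C_2\) (case (c) of condition (iii)). The decisive advantage is that WQH-switching imposes no parity or ``exactly half'' constraint, which is what allows a single scheme to cover all \(k\ge 3\) and all prime powers \(q\) simultaneously.

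The step I expect to be genuinely hard is precisely the verification of the outside-vertex condition for \emph{every} vertex of \(V\setminus(C_1\cup C_2)\): one must tune the relative position of \(C_1\) and \(C_2\) (their mutual intersection and their incidence with the ambient space) so that the many partial vertices all fall into the balanced case, while no vertex ends up seeing, say, all of \(C_1\) but only half of \(C_2\). Once such a configuration is fixed, cospectrality is immediate from the cited theorems, and non-isomorphism follows from the local invariant: switching never alters edges inside a clique, so I would compute the number of common neighbours of an edge lying inside \(C_1\) and show that it differs from the constant \(\lambda\) of the original distance-regular graph. Since this count is an isomorphism invariant that is constant on \(J(n,k)\) (respectively \(J_q(n,k)\)) but no longer constant on the switched graph, the two are non-isomorphic, completing the argument.
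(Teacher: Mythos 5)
First, a point of order: the paper does not prove this statement at all. It is \autoref{spence}, quoted from van Dam, Haemers, Koolen and Spence \cite{spence} as a previous result, so there is no internal proof to compare against, and your proposal can only be judged on its own merits. On those merits, the main gap is that you only ever construct a switching set for \(J(n,3)\). For \(J(n,k)\) with \(k\geq4\) and for \emph{every} Grassmann graph \(J_q(n,k)\) --- which is most of the theorem --- you correctly diagnose why the localized \(4\)-clique and its \(q\)-analogue violate condition (ii) of \autoref{thm:gmgeneral}, but the proposed repair, WQH-switching on a pair of cliques \(C_1,C_2\) ``in mirror-symmetric position'', is never instantiated: you do not say what \(C_1\) and \(C_2\) are, and the verification of \autoref{thm:wqh}(iii), which you yourself flag as the genuinely hard step, is not attempted. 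A proof cannot leave its central construction as an unfulfilled intention; as written, nothing is established for \(k\geq4\) or for any Grassmann graph.

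There is a second, more instructive gap: even in the \(k=3\) Johnson case, where your GM-switching set \(C=\{123,124,134,234\}\) is valid (outside vertices have \(0\) or \(2=\tfrac12|C|\) neighbours in \(C\)), your non-isomorphism invariant fails. The switched graph \(\Gamma'\) is still edge-regular with the same \(\lambda=n-2\) as \(J(n,3)\). For example, the edge \(\{123,124\}\) inside \(C\) has common neighbours \(134\), \(234\) and the \(n-4\) sets \(\{3,4,y\}\) with \(y\notin\{1,2,3,4\}\) in \(\Gamma'\), again \(n-2\) in total; for an edge between two outside vertices the number of common neighbours inside \(C\) is unchanged, because for any pairs \(P,P'\subset\{1,2,3,4\}\) the number of triples containing both \(P\) and \(P'\) equals, by inclusion--exclusion, the number containing neither; and a similar count gives \(n-2\) for the new edges between \(C\) and the outside. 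So the multiset of common-neighbour counts over \emph{edges} cannot distinguish \(\Gamma'\) from \(\Gamma\), and your closing argument collapses. The invariant that does work lives on non-adjacent pairs: in \(J(n,3)\) every non-adjacent pair has \(0\) or \(4\) common neighbours, whereas in \(\Gamma'\) the non-adjacent pair \(\{1,2,3\}\), \(\{1,5,6\}\) has exactly \(2\) (namely \(\{1,4,5\}\) and \(\{1,4,6\}\)). This is the same kind of repair the paper itself has to make in \autoref{thm:j2n4}, where edge-regularity of \(J_{\{2\}}(n,4)\) (\rref{Lemma}{lemma:edgereg}) is broken by the switch and is therefore usable --- here it is not. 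With that correction the \(k=3\) Johnson case is complete, but the theorem as a whole remains unproved.
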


\begin{theorem}[\cite{ramezani}]\label{ramezani}$ $
    \begin{enumerate}[(i)]
        \item \(K(n,k)\) is NDS if there exists an \(m<k-1\) for which \(\binom{n-k+m}{m}=2\binom{n-2k+m}{m}\).
        \item \(K_{\{0,2,4,\dots\}}(n,k)\) is NDS if \(k\geq3\).
    \end{enumerate}
\end{theorem}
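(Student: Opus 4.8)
My plan is to prove both statements by exhibiting an explicit switching set and then certifying that the switched graph is not isomorphic to the original. The whole difficulty of such arguments lies not in producing a cospectral graph — that is guaranteed the moment the switching conditions of \rref{Theorem}{thm:gmgeneral} are met — but in the non-isomorphism, so I would organise the proof around making the switch as transparent as possible and attacking that last point only at the end.

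For part (i) I would fix a \((k-m)\)-subset \(F\) of \(\{1,\dots,n\}\) and take \(C\) to be the set of all \(k\)-subsets containing \(F\), so that \(|C|=\binom{n-k+m}{m}\). Since any two members of \(C\) both contain \(F\), they intersect — here \(|F|=k-m\geq2\) by the hypothesis \(m<k-1\) — and are therefore non-adjacent in the Kneser graph; thus \(C\) is a coclique and every vertex of \(C\) has \(0\) neighbours in \(C\), which is condition (i) of \rref{Theorem}{thm:gmgeneral} for a single part. For condition (ii) I would split the outside vertices \(B\) according to whether they meet \(F\): if \(B\cap F\neq\emptyset\) then \(B\) meets every member of \(C\) and so has \(0\) neighbours in \(C\), while if \(B\cap F=\emptyset\) then a short count shows \(B\) is adjacent to exactly those \(F\cup M\in C\) for which \(M\) avoids \(B\), i.e.\ \(M\) is an \(m\)-subset of the \(n-2k+m\) points lying outside \(F\cup B\), giving \(\binom{n-2k+m}{m}\) neighbours. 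The hypothesis \(\binom{n-k+m}{m}=2\binom{n-2k+m}{m}\) is then precisely the assertion that this number equals \(\tfrac12|C|\), so condition (ii) holds and GM-switching applies.

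For part (ii) I would first rewrite the adjacency: identifying each \(k\)-set with its characteristic vector \(v_A\in\mathbb{F}_2^n\), two vertices of \(K_{\{0,2,4,\dots\}}(n,k)\) are adjacent exactly when \(v_A\cdot v_B=0\) over \(\mathbb{F}_2\), since \(|A\cap B|\equiv v_A\cdot v_B\pmod 2\). This linear description makes a switching set easy to produce: taking \(C=\{A,A'\}\) with \(A'=A\,\triangle\,\{a,b,c,d\}\) a weight-preserving double swap, the GM-conditions are automatic for a set of size two, and an outside vertex \(B\) is adjacent to exactly one of \(A,A'\) precisely when \(v_B\cdot v_{\{a,b,c,d\}}\neq0\), i.e.\ when \(|B\cap\{a,b,c,d\}|\) is odd. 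Hence the set of reversed pairs is non-empty and highly structured for every \(k\geq3\), and \rref{Theorem}{thm:gmgeneral} yields a cospectral graph; alternatively one can feed a pair \(C_1,C_2\) related by a fixed even-weight shift into \rref{Theorem}{thm:wqh}, whose balance conditions hold for the same orthogonality reasons.

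The step I expect to be the genuine obstacle, in both parts, is proving that the cospectral graph is not isomorphic to the original. GM-switching preserves regularity, so the degree sequence carries no information and a finer invariant is needed. My plan would be to compare a local count that is constant on the (vertex-transitive) original graph but is perturbed by the switch — for instance the number of common neighbours of a suitable pair of vertices, or the isomorphism type of the subgraph induced on a neighbourhood — and to show that after switching this invariant takes a value at the switched vertices \(A,A'\) (or at one of the reversed outside vertices) that is attained nowhere in \(K(n,k)\), respectively \(K_{\{0,2,4,\dots\}}(n,k)\). Verifying this amounts to a finite, if somewhat delicate, computation of local parameters, and pinning down the parameter ranges in which the perturbation is visible — including the exact role of the bound \(m<k-1\) — is where the real care is needed.
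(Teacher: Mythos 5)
First, a note on provenance: \autoref{ramezani} is stated in this paper only as a quoted result from \cite{ramezani}; no proof of it appears here, so your attempt can only be compared with the original proof of Haemers and Ramezani. Your part (i) does reproduce their switching set: taking \(C\) to be all \(k\)-sets containing a fixed \((k-m)\)-set \(F\), your verification that \(C\) is a coclique, that outside vertices meeting \(F\) have \(0\) neighbours in \(C\), and that the remaining ones have \(\binom{n-2k+m}{m}=\tfrac12|C|\) neighbours, is correct, so GM-switching applies. Part (ii), however, rests on a step that fails outright: a GM-switching set of size two can never certify NDS, because the switched graph is always isomorphic to the original. Indeed, with \(C=\{A,A'\}\), the switch only exchanges the roles of \(A\) and \(A'\) with respect to those outside vertices having exactly one neighbour in \(C\); hence the transposition that swaps \(A\) and \(A'\) and fixes all other vertices is an isomorphism from \(\Gamma\) onto the switched graph \(\Gamma'\). (This is why nontrivial GM-switching requires classes of size at least four.) Your \(\mathbb{F}_2\)-orthogonality idea is salvageable, but with a set of \emph{four} \(k\)-sets whose characteristic vectors sum to zero over \(\mathbb{F}_2\), e.g.\ \(X\cup\{a,b\}\), \(X\cup\{c,d\}\), \(X\cup\{a,c\}\), \(X\cup\{b,d\}\) for a \((k-2)\)-set \(X\) disjoint from \(\{a,b,c,d\}\): then every outside vertex automatically has an even number (\(0\), \(2\) or \(4\)) of neighbours in \(C\), and the subgraph induced on \(C\) is regular for either parity of \(k\). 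That, in essence, is the construction of \cite{ramezani}.

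The second gap affects both parts and is the one you flag yourself: non-isomorphism is nowhere proved, only a strategy is sketched. This cannot be deferred as a ``finite, if somewhat delicate, computation'': cospectrality comes for free from \rref{Theorem}{thm:gmgeneral}, so the entire content of the theorem \emph{is} the non-isomorphism claim, and for an infinite family it requires an argument uniform in \(n\), \(k\) and \(m\) (compare Theorems \ref{thm:j2n4}, \ref{thm:j2kk} and \ref{thm:k2nk} of this paper, each of which spends most of its proof on exactly this point, typically by exhibiting a pair of vertices of \(\Gamma'\) whose number of common neighbours cannot occur in \(\Gamma\)). Tellingly, the hypothesis \(m<k-1\) is never actually used in your switching verification --- the coclique property only needs \(|F|\geq1\) --- and it is precisely in the omitted non-isomorphism argument that this hypothesis must do its work. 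As it stands, part (i) is a correct construction of a cospectral pair without a proof that the pair is non-isomorphic, and part (ii) establishes nothing, since the graph it produces is isomorphic to the one you started from.
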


\begin{theorem}[\cite{cioba}]\label{cioba}$ $
    \begin{enumerate}[(i)]
    \item \(J_{\{0,1,\dots,m\}}(3k-2m-1,k)\) is NDS if \(k\geq\max(m+2,3)\).
    \item \(J_{\{0,1,\dots,m\}}(n,2m+1)\) is NDS if \(m\geq2\) and \(n\geq4m+2\).
    \end{enumerate}
\end{theorem}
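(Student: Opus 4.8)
The plan is to prove both statements by exhibiting, in each case, a cospectral mate obtained through Godsil--McKay switching (\rref{Theorem}{thm:gmgeneral}); for the odd case \(k=2m+1\) of part (ii) the pairing symmetry also makes WQH-switching (\rref{Theorem}{thm:wqh}) a natural alternative. Throughout, write \(\Gamma=J_{\{0,1,\dots,m\}}(n,k)\), so that two \(k\)-sets are adjacent exactly when they meet in at most \(m\) points, and recall that \(\Gamma\) is vertex-transitive under the natural action of the symmetric group \(S_n\). Cospectrality of the switched graph is then automatic once a valid switching set is produced, so the real content is (a) producing the set and (b) proving non-isomorphism.

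First I would construct the switching set. The idea is to let \(S_n\)-symmetry absorb condition (i) of \rref{Theorem}{thm:gmgeneral}: choose the parts \(C_1,\dots,C_r\) to be orbits of \(k\)-sets under the stabiliser of a fixed chain of subsets of \(\{1,\dots,n\}\), so that any two \(k\)-sets in the same part are equivalent under an automorphism and hence have equal numbers of neighbours in every part. This reduces everything to verifying condition (ii): that each remaining \(k\)-set \(Q\) has \(0\), \(\tfrac12|C_i|\) or \(|C_i|\) neighbours in each \(C_i\). Counting the neighbours of \(Q\) in \(C_i\) amounts to counting \(k\)-sets of a prescribed intersection pattern, i.e. a product of binomial coefficients, and the ``half'' case is exactly a binomial identity in the spirit of the condition \(\binom{n-k+m}{m}=2\binom{n-2k+m}{m}\) of \rref{Theorem}{ramezani}. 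I expect the parameter restrictions --- \(n=3k-2m-1\) with \(k\geq\max(m+2,3)\) in part (i), and \(k=2m+1\) with \(n\geq4m+2\) in part (ii) --- to be precisely what force this identity to hold and to guarantee enough ``half-neighbour'' vertices that the switching is non-trivial; locating the configuration that realises these counts is the first technical hurdle.

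The main obstacle will be proving that the switched graph \(\Gamma'\) is not isomorphic to \(\Gamma\). Since cospectral graphs share every spectrum-determined quantity, I would look for an invariant that is \emph{not} determined by the spectrum yet is forced to be constant on \(\Gamma\) by its symmetry. The cleanest candidate is the number of triangles through a vertex (or, should that turn out to be switching-invariant, a higher local count such as the number of \(4\)-cycles through a vertex): vertex-transitivity makes this number the same for every vertex of \(\Gamma\), whereas reversing the adjacencies prescribed by the switching should change this count at the affected vertices without changing the total (which is spectral). Showing that the per-vertex count becomes non-constant in \(\Gamma'\) then shows \(\Gamma'\) is not vertex-transitive, and hence cannot be isomorphic to the vertex-transitive graph \(\Gamma\). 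Carrying this out means computing, for a switched vertex, how many triangles it gains or loses --- a bounded but delicate local calculation --- and checking that the net change is genuinely nonzero in the stated parameter ranges; I expect this verification, together with pinning down the exact switching configuration, to be where the difficulty of both parts concentrates.
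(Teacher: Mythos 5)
First, a point of order: this theorem is not proved in the paper at all --- it is quoted from \cite{cioba} as a known result in \rref{Section}{sec:previousresults}, and the only trace of its proof here is the description in \rref{Section}{sec:j2kk} of the switching set and the non-isomorphism reasoning that \cite{cioba} uses. Measured against that argument, your proposal points in the right general direction (GM-switching with a highly symmetric switching set, then a local counting invariant to rule out isomorphism), but it is a plan rather than a proof: every step that constitutes the actual mathematics is deferred, as you yourself flag (``the first technical hurdle'', ``where the difficulty of both parts concentrates'').

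Concretely, three things are missing. First, the switching sets are never exhibited, and in \cite{cioba} they are explicit: for (i), \(C\) is the set of \(k\)-subsets containing a fixed \((k-1)\)-set, a coclique (because \(k\geq m+2\)) of size \(n-k+1=2(k-m)\); for (ii), \(C\) is the set of \(k\)-subsets of a fixed \((k+1)\)-set, a coclique of size \(k+1=2m+2\). Second, condition (ii) of \rref{Theorem}{thm:gmgeneral} is never verified; it is a short linear count, not the binomial identity you anticipate. In case (i), a vertex meeting the fixed \((k-1)\)-set in \(l\) points is adjacent to all of \(C\) if \(l\leq m-1\), to none of it if \(l\geq m+1\), and to exactly \(n-2k+m+1\) elements of \(C\) if \(l=m\); the hypothesis \(n=3k-2m-1\) is precisely the statement that \(n-2k+m+1=k-m=\tfrac{1}{2}|C|\). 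In case (ii) the analogous count gives \(m+1=\tfrac{1}{2}|C|\) independently of \(n\), which is why no relation between \(n\) and \(k\) is needed there. Third, the non-isomorphism step is only a hope: your plan (show the per-vertex triangle count becomes non-constant, so \(\Gamma'\) is not vertex-transitive) is legitimate in principle, but you compute nothing, and such a count need not detect the switch. This is where \cite{cioba} --- and this paper, in the proofs of Theorems~\ref{thm:j2n4} and \ref{thm:j2kk} --- does real work: one compares the common-neighbour number \(\lambda(v,w)\) of a carefully chosen pair before and after switching, and then uses vertex-transitivity of \(\Gamma\), invariance of the multiset of such counts, and an automorphism fixing the relevant vertices to force a contradiction. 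Until these three gaps are filled, the parameter hypotheses \(k\geq\max(m+2,3)\), \(m\geq2\), \(n\geq4m+2\) play no role in your text, so nothing specific to the theorem has actually been established.
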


\begin{theorem}[\cite{ihringer}]\label{ihringer}
    \(J_q(n,2)\) is NDS.
\end{theorem}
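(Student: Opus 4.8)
The plan is to exploit that \(J_q(n,2)\) has diameter \(\min(2,n-2)=2\) for every \(n\ge4\), and is therefore strongly regular. Consequently a cospectral mate is nothing more than a non-isomorphic strongly regular graph with the same parameters, which I will produce by switching. The natural source of switching sets is the line geometry of a small subspace: if a line \(v\) (a \(2\)-subspace) is not contained in a fixed \(3\)-subspace \(\pi\), then \(v\cap\pi\) is at most a point \(R\), and \(v\) is adjacent to a line \(m\subseteq\pi\) exactly when \(R\in m\). Thus such a \(v\) ``sees'' precisely the lines of \(\pi\) through a single point, which makes the intersection pattern between \(v\) and any configuration of lines inside \(\pi\) highly controlled.

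Concretely, I would fix a \(3\)-subspace \(\pi\), two distinct points \(P_1,P_2\) of \(\pi\), write \(\ell_{12}=\langle P_1,P_2\rangle\), and set \(C_1\) to be the \(q\) lines of \(\pi\) through \(P_1\) other than \(\ell_{12}\), and \(C_2\) the \(q\) lines of \(\pi\) through \(P_2\) other than \(\ell_{12}\). These are disjoint sets of equal size, each inducing a clique (all lines of \(C_i\) pass through \(P_i\)), and since any two lines of a plane meet, every line of \(C_1\) is adjacent to every line of \(C_2\); hence for each vertex the number of neighbours in its own \(C_i\) minus the number in \(C_j\) equals \((q-1)-q=-1\), so condition (ii) of \rref{Theorem}{thm:wqh} holds with \(c=-1\). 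For condition (iii) I would run through a line \(v\notin C_1\cup C_2\): if \(v\subseteq\pi\) it meets all of \(C_1\) and all of \(C_2\), giving equal numbers; if \(v\not\subseteq\pi\) it meets exactly the lines of \(C_1\cup C_2\) through \(R=v\cap\pi\), which yields all \(q\) lines of \(C_i\) and none of \(C_j\) when \(R=P_i\) (types (a)/(b)), exactly one line of each \(C_i\) when \(R\in\pi\) lies off \(\langle P_1,P_2\rangle\), and no line of either when \(R\) is a point of \(\langle P_1,P_2\rangle\) distinct from \(P_1,P_2\) or when \(v\cap\pi=\emptyset\). In every case \(v\) is of type (a), (b) or (c), so the conditions are met. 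It is worth noting that the troublesome value \(1\) is exactly what breaks Godsil--McKay condition (ii); the extra freedom of the ``equal numbers'' clause in WQH-switching is precisely what rescues the construction. Since for every \(n\ge4\) and every \(q\) there genuinely exist lines through \(P_1\) not contained in \(\pi\) (type (a) vertices), the switch is non-trivial.

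The hard part will be proving that the resulting graph \(\Gamma'\) is not isomorphic to \(\Gamma=J_q(n,2)\). As both are strongly regular with identical parameters, no spectral or parameter-based invariant can separate them, so I would turn to the local clique structure, which is rigid for Grassmann graphs. A type (a) line \(v\) lies, together with \(C_1\cup\{\ell_{12}\}\), in the point-pencil clique at \(P_1\); switching deletes the edges from \(v\) to \(C_1\) and instead joins \(v\) to the lines of \(C_2\), which it does not meet in \(\Gamma\). I would make this precise by showing that \(\Gamma'\) acquires a vertex whose maximal cliques, or whose number of cliques of a prescribed size, differ from anything occurring in \(\Gamma\); equivalently, \(\Gamma'\) fails the characterisation of \(J_q(n,2)\) as a geometric strongly regular graph. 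Establishing such a finer, non-spectral invariant, rather than verifying the (routine) switching conditions, is where the real work lies, and I expect to fall back on a small base case together with a structural argument to confirm that the change is detectable.
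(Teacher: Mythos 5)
Your switching construction is correct and is verified in full: each \(C_i\) (the pencil of lines of \(\pi\) through \(P_i\), minus \(\ell_{12}\)) is a clique of size \(q\), every line of \(C_1\) meets every line of \(C_2\) because both lie in the plane \(\pi\), so condition (ii) of \autoref{thm:wqh} holds with \(c=(q-1)-q=-1\), and your case distinction on \(R=v\cap\pi\) correctly establishes condition (iii); moreover type (a) vertices exist for all \(n\geq4\), so the switch is nontrivial. Note, incidentally, that the present paper does not prove this theorem at all: it quotes it from \cite{ihringer}, whose authors use GM-switching with a different switching set, so your WQH pencil construction is in any case a departure from the known proof.

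The genuine gap is that the non-isomorphism of \(\Gamma'\) and \(\Gamma\) is nowhere proved, and this is not a finishing touch but the entire content of the theorem. Cospectrality comes for free from \autoref{thm:wqh}; NDS means precisely that some cospectral graph is \emph{not} isomorphic to \(\Gamma\). A valid, nontrivial switching does not imply this: switching can return a graph isomorphic to the one you started from (already for the path on four vertices, GM-switching with respect to the middle edge yields an isomorphic path), so without an invariant separating \(\Gamma'\) from \(\Gamma\), nothing has been established. Your final paragraph lists candidate invariants (maximal cliques through a switched vertex, number of cliques of prescribed size, failure of a geometric characterisation, a base case plus a structural argument) but carries none of them out, and none is obviously workable uniformly in \(q\) and \(n\); in particular, characterising \(J_q(n,2)\) by its clique structure among strongly regular graphs with the same parameters is itself a delicate matter, which is exactly why you cannot appeal to it without proof. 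Compare with the theorems that \emph{are} proved in this paper: in \autoref{thm:j2n4}, \autoref{thm:j2kk} and \autoref{thm:k2nk}, checking the switching conditions takes a few lines, while the bulk of each proof is a concrete count --- common neighbours of an explicit switched pair, the invariant multiset \(\Lambda(w)\), or the number of \(k\)-spaces meeting three given ones --- showing that the switched graph cannot be isomorphic to the original; the proof in \cite{ihringer} hinges on such an argument as well. To complete your proof you would need, for instance, to take a type (a) line \(v\) through \(P_1\) and exhibit an explicit pair of vertices of \(\Gamma'\) whose number of common neighbours (or whose local clique structure) is impossible in \(J_q(n,2)\), verified for all prime powers \(q\) and all \(n\geq4\). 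Until then, what you have is a correct construction plus a plan, not a proof.
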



\section{\texorpdfstring{\(\boldsymbol{J_{\{2\}}(n,4)}\) is NDS if \(\boldsymbol{n\geq8}\)}{J\{2\}(n,4) is NDS if n≥8}}\label{sec:j2n4}

The authors of \cite{cioba} found, computationally, that the graph \(J_{\{2\}}(8,4)\) has a cospectral mate by GM-switching with respect to switching sets of size \(8\), which implies that it is NDS. 
In this section, we place this graph in an infinite family of graphs that are NDS by using the recent WQH-switching. This solves the second open problem of \cite{cioba}.

    We assume that \(n\geq8\), in accordance with the usual assumption that \(k\leq n/2\). Note that if \(n=7\), \(J_{\{2\}}(n,4)\cong J_{\{1\}}(7,3)\) is NDS by \autoref{koolen}. If \(n=6\), then \(J_{\{2\}}(n,4)\cong K(6,2)\) is DS by \autoref{chang} and if \(n\leq5\), then \(J_{\{2\}}(n,4)\) is trivially DS.

In the main result of this section, we use that \(J_{\{2\}}(n,4)\) is edge-regular (adjacent vertices have a constant number of common neighbours), while the  cospectral mate is not. Actually, every ``elementary'' generalized Johnson graph \(J_{\{i\}}(n,k)\) is edge-regular, because it is the distance-\((k-i)\) graph of the Johnson graph \(J(n,k)\), which is distance-regular. 

\begin{lemma}\label{lemma:edgereg}
    Adjacent vertices of \(J_{\{2\}}(n,4)\) have \(\frac{1}{2}n(n+3)-26\) common neighbours.
\end{lemma}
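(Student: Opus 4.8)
The plan is to count common neighbours directly, by fixing an edge, partitioning the ground set according to it, and classifying each common neighbour by its intersection pattern with the blocks of the partition.

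First I would take two adjacent vertices, that is, $4$-subsets $A$ and $B$ with $|A\cap B|=2$, and split $\{1,\dots,n\}$ into the four disjoint blocks $I=A\cap B$, $A\setminus B$ and $B\setminus A$ (each of size $2$), together with the remainder $O$ of size $n-6$. A common neighbour is a $4$-set $C$ satisfying $|C\cap A|=2$ and $|C\cap B|=2$, so the task is to count such $C$.

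Next I would record how $C$ meets each block. Writing $a=|C\cap I|$ and letting $b$, $c$, $d$ denote the sizes of the intersections of $C$ with $A\setminus B$, $B\setminus A$ and $O$ respectively, the three relations $a+b=2$ (from $|C\cap A|=2$), $a+c=2$ (from $|C\cap B|=2$) and $a+b+c+d=4$ (from $|C|=4$) force $b=c=2-a$ and $d=a$. Thus the entire intersection pattern is pinned down by the single parameter $a\in\{0,1,2\}$. Here the assumption $n\ge 8$ enters: it guarantees $d=a\le n-6$, so the case $a=2$ is actually realizable.

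Finally I would sum over $a\in\{0,1,2\}$ the number of admissible sets, namely $\binom{2}{a}\binom{2}{2-a}^{2}\binom{n-6}{a}$, which gives $1+8(n-6)+\binom{n-6}{2}$, and then simplify this to $\tfrac12 n(n+3)-26$. The computation is entirely routine; the only genuine content is the observation that the three cardinality constraints collapse the pattern to one free parameter, so I do not anticipate any real obstacle beyond the bookkeeping.
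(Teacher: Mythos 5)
Your proof is correct and takes essentially the same approach as the paper: both fix an edge, split $\{1,\dots,n\}$ into the blocks determined by the two $4$-sets, and classify common neighbours by their intersection size with $A\cap B$, yielding the same three terms $1+8(n-6)+\binom{n-6}{2}$. Your derivation that the constraints collapse to the single parameter $a$ is just a slightly more systematic phrasing of the paper's three-case analysis.
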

\begin{proof}
    Choose any two adjacent vertices \(v\) and \(w\). By relabelling the elements of \linebreak \(\{1,2,\dots,n\}\), we may assume that \(v=\{1,2,3,4\}\) and \(w=\{1,2,5,6\}\). There are three types of common neighbours.
    \begin{enumerate}[(i)]
        \item Vertices that include \(\{1,2\}\). The other two elements can be chosen from \(\{7,8,\dots,n\}\). So there are \(\binom{n-6}{2}\) such vertices.
        \item Vertices that intersect \(\{1,2\}\) in one element. They must contain one element of \(\{3,4\}\) and one element of \(\{5,6\}\), while the remaining element should be in \(\{7,8,\dots,n\}\). There are \(2^3(n-6)\) such vertices.
        \item If a vertex is adjacent to \(v\) and \(w\) but does not contain \(1\) or \(2\), it must be equal to \(\{3,4,5,6\}\).
    \end{enumerate}
    We can now conclude that \(v\) and \(w\) have \(\binom{n-6}{2}+2^3(n-6)+1=\frac{1}{2}n(n+3)-26\) common neighbours.
\end{proof}

\begin{theorem}\label{thm:j2n4}
    \(J_{\{2\}}(n,4)\) is not determined by its spectrum if \(n\geq8\).
\end{theorem}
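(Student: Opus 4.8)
The plan is to exhibit an explicit cospectral mate for $J_{\{2\}}(n,4)$ by applying WQH-switching (\autoref{thm:wqh}), and then to argue non-isomorphism using \autoref{lemma:edgereg}. First I would search for a suitable pair of switching sets $C_1,C_2$ of equal size inside the vertex set of $J_{\{2\}}(n,4)$. Since the $n=8$ case was found computationally with GM-switching sets of size $8$, and the section promises to use WQH-switching, I would guess the switching sets are small (say, constant size independent of $n$, likely built from the same combinatorial configuration that works at $n=8$) and sit on a fixed ground subset like $\{1,2,3,4,5,6,7,8\}\subseteq\{1,\dots,n\}$. The key design task is to choose $C_1,C_2$ so that conditions (i)--(iii) of \autoref{thm:wqh} hold uniformly for all $n\geq 8$: the constant $c$ in (ii) must be the same for every vertex of $C_1\cup C_2$, and every outside vertex must fall into one of the three types (a), (b), (c). Because the graph is vertex-transitive and highly symmetric, one checks (iii) by counting intersections according to how many of the coordinates $\{1,\dots,8\}$ a given outside vertex contains; vertices ``far'' from the configuration (containing few of the distinguished elements) will automatically satisfy (c) by symmetry, so the real verification is only for the finitely many ``local'' types of outside vertices, and this count should be stable in $n$.

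Once the switching sets are fixed and conditions (i)--(iii) verified, \autoref{thm:wqh} immediately yields a graph $\Gamma'$ cospectral with $J_{\{2\}}(n,4)$. The second, and conceptually cleaner, half is to show $\Gamma'$ is \emph{not} isomorphic to $J_{\{2\}}(n,4)$. Here I would invoke the edge-regularity remark preceding the statement: by \autoref{lemma:edgereg}, every pair of adjacent vertices in $J_{\{2\}}(n,4)$ has exactly $\tfrac{1}{2}n(n+3)-26$ common neighbours. The strategy is to exhibit, in the switched graph $\Gamma'$, a single edge whose endpoints have a different number of common neighbours, thereby showing $\Gamma'$ is not edge-regular and hence not isomorphic to the (edge-regular) original. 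The natural candidate is an edge incident to a switched vertex: switching reverses adjacencies between $C_1\cup C_2$ and certain outside vertices of type (a) or (b), which locally perturbs common-neighbour counts. I would compute the common-neighbour count for one carefully chosen edge in $\Gamma'$ and check that it differs from $\tfrac{1}{2}n(n+3)-26$.

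The main obstacle is the first half --- designing the switching sets and checking condition (iii) of \autoref{thm:wqh} uniformly in $n$. Condition (iii) is the delicate one: it is an exact trichotomy that every outside vertex must satisfy with no exceptions, so a single ``bad'' intersection pattern kills the construction. I expect the verification to split outside vertices into cases by their intersection sizes with the coordinate set underlying $C_1\cup C_2$, and the work is to confirm that each case lands in (a), (b), or (c) with the counts matching exactly; the appearance of $2^3$ and the binomial terms in \autoref{lemma:edgereg} suggests the relevant local configurations are governed by choices among the six ``active'' coordinates, so the case analysis, while finite, requires care. By contrast, condition (ii) (the constant defect $c$) and the non-isomorphism argument via edge-regularity are comparatively routine once the configuration is pinned down. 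A secondary subtlety is ensuring the switched graph genuinely differs from the original as a graph (i.e.\ that switching actually changes some adjacency), but this follows as soon as at least one outside vertex of type (a) or (b) exists, which the construction guarantees for all $n\geq 8$.
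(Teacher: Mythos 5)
Your overall strategy does coincide with the paper's: WQH-switching with respect to switching sets supported on a bounded ground set, verification of condition (iii) of \autoref{thm:wqh} by a case analysis on intersection types, and non-isomorphism via the edge-regularity of \(J_{\{2\}}(n,4)\) (\autoref{lemma:edgereg}). But there is a genuine gap: you never exhibit the switching sets. That is not a deferrable detail --- it is the entire content of the construction half of the proof, and ``I would search for a suitable pair'' is a plan to find a proof, not a proof. Your guesses about what the sets should look like are also off: the paper's sets are not a transplant of the size-\(8\) GM-switching sets of \cite{cioba} and do not sit on \(\{1,\dots,8\}\). They are the two triples \(C_1=\{\{1,2,3,4\},\{1,2,3,5\},\{1,2,3,6\}\}\) and \(C_2=\{\{1,4,5,6\},\{2,4,5,6\},\{3,4,5,6\}\}\), supported on \(\{1,\dots,6\}\); the symmetry under swapping \(\{1,2,3\}\) with \(\{4,5,6\}\) makes conditions (i) and (ii) immediate (each \(C_i\) is a coclique and each of its vertices is adjacent to all three vertices of the other triple, so \(c=-3\)), and it reduces condition (iii) to seven intersection types \((|v\cap\{1,2,3\}|,|v\cap\{4,5,6\}|)\). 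Without these sets, nothing after the first paragraph of your proposal can actually be carried out.

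A second, smaller gap: you call the non-isomorphism step ``comparatively routine,'' but it is precisely where the paper has to work hardest, and exactly at the boundary case. For the natural choice of an edge affected by the switching (the paper takes \(v=\{1,2,3,4\}\in C_1\) and \(w=\{1,4,5,7\}\), an edge preserved by the switching), the surplus of common neighbours in \(\Gamma'\) over the edge-regular count \(\tfrac{1}{2}n(n+3)-26\) works out to \(\binom{n-7}{2}\), which vanishes precisely at \(n=8\) --- the one value of \(n\) this section was written to settle. The paper therefore needs a separate argument for \(n=8\), using a pair \(v=\{1,2,3,4\}\), \(u=\{5,6,7,8\}\) that is non-adjacent in \(\Gamma\) but becomes an edge of \(\Gamma'\) with at least \(36-3-3=30>18\) common neighbours. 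So ``compute the count for one carefully chosen edge'' does not go through uniformly in \(n\): any correct write-up along your lines must either find an edge whose count differs for every \(n\geq8\) (the paper's choice does not achieve this) or split off \(n=8\) as a special case.
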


\begin{proof}
Define the sets \(C_1:=\{v_1,v_2,v_3\}\) and \(C_2:=\{v_4,v_5,v_6\}\), where:\\[5mm]
    \begin{minipage}{.5\textwidth}
    \centering
    \begin{itemize}
        \item \(v_1 := \{1,2,3,4\}\)
        \item \(v_2 := \{1,2,3,5\}\)
        \item \(v_3 := \{1,2,3,6\}\)
        \item \(v_4 := \{1,4,5,6\}\)
        \item \(v_5 := \{2,4,5,6\}\)
        \item \(v_6 := \{3,4,5,6\}\)
    \end{itemize}
    \end{minipage}
    \begin{minipage}{.5\textwidth}
        \centering
        \newcommand{\radius}{1.5}
\begin{tikzpicture}[remember picture]
    \draw[rounded corners=15pt] (-1.5*\radius,-1.5*\radius) rectangle (-.5*\radius,1.5*\radius)
    (.5*\radius,-1.5*\radius) rectangle (1.5*\radius,1.5*\radius);
	\path[every node/.append style={circle, fill=black, minimum size=5pt, label distance=2pt, inner sep=0pt}]
    (-\radius,\radius) node[label={180:\(v_1\)}] (0) {}
    (-\radius,0) node[label={180:\(v_2\)}] (1) {}
    (-\radius,-\radius) node[label={180:\(v_3\)}] (2) {}
    (\radius,\radius) node[label={0:\(v_4\)}] (3) {}
    (\radius,0) node[label={0:\(v_5\)}] (4) {}
    (\radius,-\radius) node[label={0:\(v_6\)}] (5) {};
    \draw (0) edge (3) edge (4) edge (5)
    (1) edge (3) edge (4) edge (5)
    (2) edge (3) edge (4) edge (5);
    \draw (-1.8*\radius,.6*\radius) node {\(C_1\)}
    (1.8*\radius,.6*\radius) node {\(C_2\)};
    \end{tikzpicture}
    \end{minipage}\\[5mm]
Notice the symmetry with respect to the sets \(\{1,2,3\}\) and \(\{4,5,6\}\). With this in mind, we can prove that \(C_1\) and \(C_2\) form a WQH-switching set of \(J_{\{2\}}(n,4)\), by checking all conditions of \autoref{thm:wqh}. The first two conditions are fulfilled by definition. In order to check the third condition, select an arbitrary \(4\)-set \(v\notin C_1\cup C_2\). We distinguish seven cases, according to the intersection size of \(v\) with \(\{1,2,3\}\) and \(\{4,5,6\}\), see \autoref{tab:new2}. Because of the symmetry of \(C\) with respect to these sets, we assume that \(|v\cap\{1,2,3\}|\leq|v\cap\{4,5,6\}|\). Also note that \(|v\cap\{1,2,3\}|+|v\cap\{4,5,6\}|\leq4\), and that these cardinalities cannot be \(1\) and \(3\) simultaneously, since otherwise the vertex is one of the vertices in \(C_1\cup C_2\). In each case, the requirements of \autoref{thm:wqh}(iii) are met.
    \begin{table}[H]
        \centering
        \begin{tabular}{|c|c|c|c|c|c|c|c|c|}
    \hline
    \(|v\cap\{1,2,3\}|\) & 0&0&0&0&1&1&2\\
    \hline
    \(|v\cap\{4,5,6\}|\) & 0&1&2&3&1&2&2\\
    \hlineB{5}
    \# neighbours in \(C_1\) &0&0&0&0&1&2&1\\
    \hline
    \# neighbours in \(C_2\) &0&0&3&0&1&2&1\\
    \hline
\end{tabular}
        \caption{Given \(v\notin C_1\cup C_2\), the number of neighbours of \(v\) in \(C_1\) and \(C_2\) only depend on its intersection size with \(\{1,2,3\}\) and \(\{4,5,6\}\).}
        \label{tab:new2}
    \end{table}
    
    We are left to prove that the graph obtained by WQH-switching with respect to \(C_1\) and \(C_2\), is not isomorphic to the original one.
    
    Consider the vertices \(v=\{1,2,3,4\}\in C_1\) and \(w=\{1,4,5,7\}\notin C_1\cup C_2\). The neighbours of \(w\) are preserved by switching, since \(|w\cap\{1,2,3\}|=1\) and \(|w\cap\{4,5,6\}|=2\) (see \autoref{tab:new2}). In particular, \(v\) and \(w\) are adjacent in both \(\Gamma\) and \(\Gamma'\). We show that they have more common neighbours in \(\Gamma'\) than in \(\Gamma\). For this, we only need to consider neighbours of \(w\) for which the adjacency with \(v\) is changed during the switching process. We observe from \autoref{tab:new2} that the switching only affects those vertices which have \(2\) elements in one of the sets \(\{1,2,3\}, \{4,5,6\}\) and none in the other. Neighbours of \(w\) that contain \(2\) elements of \(\{1,2,3\}\) and none of \(\{4,5,6\}\), are of the form \(\{1,2,7,a\}\) or \(\{1,3,7,a\}\) with \(a\geq8\). So there are a total of \(2(n-7)\) vertices that are adjacent to \(v\) and \(w\) in \(\Gamma\) but not adjacent to \(v\) in \(\Gamma'\). On the other hand, neighbours of \(w\) that contain no element of \(\{1,2,3\}\) and \(2\) elements of \(\{4,5,6\}\), are of the form \(\{4,5,a,b\}\), \(\{4,6,7,a\}\) or \(\{5,6,7,a\}\) with \(a,b\geq8\). They are not adjacent to \(v\) in \(\Gamma\), but become adjacent to \(v\) after switching. So \(\binom{n-7}{2}+2(n-7)\) new common neighbours are created.
    
    We conclude that \(v\) and \(w\) have \(\binom{n-7}{2}\) more common neighbours in \(\Gamma'\) than any two adjacent vertices in \(\Gamma\), which is a strictly positive difference, except when \(n=8\). In order to solve this one specific case, we take a look at an other pair of vertices.
    
    Assume \(n=8\) and consider the vertices \(v=\{1,2,3,4\}\in C_1\) and \(u=\{5,6,7,8\}\notin C_1\cup C_2\). They are not adjacent in \(\Gamma\), but become adjacent after switching, since \(|u\cap\{1,2,3\}|=0\) and \(|u\cap\{4,5,6\}|=2\) (see \autoref{tab:new2}). We give a lower bound on the number of common neighbours of \(v\) and \(u\) in \(\Gamma'\). In the original graph, \(v\) and \(u\) have \(\binom{4}{2}^2=36\) common neighbours. Three of these are the vertices of \(C_2\), and are therefore lost after switching. All other common neighbours of \(v\) and \(u\) in \(\Gamma\) that are no longer common neighbours in \(\Gamma'\), are those that contain \(2\) elements of \(\{1,2,3\}\) and none of \(\{4,5,6\}\), i.e.\ the three vertices \(\{1,2,7,8\}\), \(\{1,3,7,8\}\) and \(\{2,3,7,8\}\). We are left with at least \(36-3-3=30\) common neighbours, which is strictly more than \(\frac{1}{2}8(8+3)-26=18\), the number of common neighbours of any two vertices in \(\Gamma\) (\rref{Lemma}{lemma:edgereg}).
    
    We proved that there are always two neighbours in \(\Gamma'\) which have strictly more common neighbours than any two neighbours in \(\Gamma\). So \(\Gamma\) and \(\Gamma'\) cannot be isomorphic.
\end{proof}

\section{\texorpdfstring{\(\boldsymbol{J_{\{1,2,\dots\frac{k-1}{2}\}}(2k,k)}\) is NDS if \(\boldsymbol{k\geq5}\), \(\boldsymbol{k}\) odd}{J\{1,2,\dots,(k-1)/2\}(2k,k) is NDS if k≥5, k odd}}\label{sec:j2kk}

The next result uses GM-switching with respect to two switching sets. Our argument is inspired by the proof of \autoref{cioba}(i), as \(J_{\{1,2,\dots\frac{k-1}{2}\}}(2k,k)\) can be seen as two copies of \(J_{\{0,1,\dots\frac{k-3}{2}\}}(2k-1,k-1)\), connected in a certain way. While our first switching set corresponds to the one used to show \autoref{cioba}(i), the second one is given by its complement.

\begin{theorem}\label{thm:j2kk}
    \(J_{\{1,2,\dots\frac{k-1}{2}\}}(2k,k)\) is NDS if \(k\geq5\), \(k\) odd.
\end{theorem}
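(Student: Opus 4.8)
The plan is to apply GM-switching (\autoref{thm:gmgeneral}) with \emph{two} switching sets, exploiting the decomposition alluded to above. Write \(m:=\frac{k-3}{2}\), so that \(S=\{1,\dots,m+1\}\) and \(k=2m+3\). Fix an element, say \(2k\), whose presence or absence splits the vertices into the two copies of \(J_{\{0,\dots,m\}}(2k-1,k-1)\), and choose a partition \([2k]=G\cup H\) with \(2k\in G\), \(|G|=k-1\) and \(|H|=k+1\). Define
\[
C_1:=\{\,G\cup\{p\}:p\in H\,\},\qquad C_2:=\{\,H\setminus\{p\}:p\in H\,\}.
\]
Then \(|C_1|=|C_2|=k+1\), which is even because \(k\) is odd, and \(C_2=\{\bar v:v\in C_1\}\) is the image of \(C_1\) under complementation \(v\mapsto\bar v:=[2k]\setminus v\). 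This map is an automorphism of \(\Gamma\) since \(|\bar v\cap\bar w|=|v\cap w|\), and it interchanges the two copies; \(C_1\) lies in one copy and is (a relabelling of) the set used to prove \autoref{cioba}(i), while \(C_2\) is its complement in the other.

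First I would verify the hypotheses of \autoref{thm:gmgeneral} for \(\{C_1,C_2\}\). Any two members of \(C_1\) meet in \(G\) and any two members of \(C_2\) meet in \(H\) minus two points, both of size \(k-1\notin S\), so each \(C_i\) is independent. For the cross term, \((G\cup\{p\})\cap(H\setminus\{q\})\) equals \(\{p\}\) if \(p\neq q\) and \(\emptyset\) otherwise, so every vertex of \(C_1\) is adjacent to exactly \(k\) vertices of \(C_2\) (all but its own complement), and symmetrically; this settles condition~(i). For condition~(ii), take \(v\notin C_1\cup C_2\) and set \(a:=|v\cap G|\); a short count shows that \(v\) has \(k+1\), \(\frac{k+1}{2}\), or \(0\) neighbours in \(C_1\) according as \(1\le a\le m\), \(a=m+1\), or \(a\ge m+2\), while \(a=0\) would force \(v\in C_2\). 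The neighbour count in \(C_2\) then follows by applying the complementation automorphism. Hence \(\{C_1,C_2\}\) is a valid switching set and the resulting graph \(\Gamma'\) is cospectral with \(\Gamma\).

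The remaining, and main, task is to show \(\Gamma'\not\cong\Gamma\). Here the argument of the previous section must be adapted, since \(\Gamma\)—a union of two relations of the Johnson scheme—is vertex-transitive but \emph{not} edge-regular. I would instead use the isomorphism invariant \(M(\Gamma):=\max_{uw\in E}|N(u)\cap N(w)|\), the largest number of common neighbours of an adjacent pair: the common-neighbour count of an intersection-\(i\) edge is a sum of products of binomials over the partition of \([2k]\) into \(u\cap w\), \(u\setminus w\), \(w\setminus u\) and the rest, and maximizing over \(i\in S\) yields \(M(\Gamma)\) explicitly. It then suffices to exhibit one edge of \(\Gamma'\) whose endpoints have strictly more than \(M(\Gamma)\) common neighbours. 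Mirroring \autoref{thm:j2n4}, I would take \(v\in C_1\) together with an unswitched vertex \(w\) (one with \(|w\cap G|\neq m+1\), so that the edge \(vw\) survives switching) and count how many switched vertices \(x\), namely those with \(|x\cap G|=m+1\), become new common neighbours of \(v\) and \(w\) in \(\Gamma'\).

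The hardest step will be this last comparison: one must choose \(v\) and \(w\) so that switching produces a net surplus of common neighbours exceeding \(M(\Gamma)\), and verify the resulting inequality for all odd \(k\ge5\) uniformly. As in \autoref{thm:j2n4}, I would expect the surplus to grow with \(n=2k\) while the gap to \(M(\Gamma)\) stays controlled, so that the strict inequality holds for all \(k\ge5\), with at most the smallest case \(k=5\) possibly requiring a separate, more careful count.
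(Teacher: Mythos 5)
Your switching construction and its verification are correct, and they coincide (up to the relabelling \(G=[k-1]\), \(H=\{k,\dots,2k\}\)) with the paper's; the trichotomy \(k+1\), \(\frac{k+1}{2}\), \(0\) according to \(|v\cap G|\) is exactly right. The gap is in the second half: the non-isomorphism argument, which is the real content of the theorem, is only a plan, and the plan as stated is unlikely to go through. You want to use the invariant \(M(\Gamma)=\max_{uw\in E}|N(u)\cap N(w)|\) and exhibit an edge of \(\Gamma'\) exceeding it. The analogy with \autoref{thm:j2n4} that you invoke breaks down in two ways. First, \(J_{\{2\}}(n,4)\) is edge-regular, so there every edge attains \(M(\Gamma)\) and any surviving edge with a strictly positive switching surplus finishes the proof; your \(\Gamma\) is a union of \(\frac{k-1}{2}\) classes of the Johnson scheme and is \emph{not} edge-regular, so the common-neighbour count \(\lambda_i\) of an edge depends on its intersection size \(i\in S\), and you would have to identify the maximizing size \(i^*\) and then produce a positive net surplus on a surviving edge with exactly that intersection size; you do neither, and the net change (a difference of two counts of switched vertices) can have either sign. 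Second, your heuristic that the surplus grows with \(n\) while the gap to \(M(\Gamma)\) stays controlled has no force here: in \autoref{thm:j2n4} the surplus \(\binom{n-7}{2}\) grows in the free parameter \(n\) with \(k=4\) fixed, whereas here \(n=2k\), so both the surplus and the gaps \(M(\Gamma)-\lambda_i\) are binomial-sized expressions in the single parameter \(k\), and nothing makes one dominate the other.

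The paper's proof avoids any global maximum, and the idea it uses is what is missing from your proposal. It takes the \emph{non-adjacent} pair \(v=[k]\in C_1\), \(w=\{2,3,\dots,k+1\}\notin C_1\cup C_2\) and shows by a direct count that \(\lambda(v,w)\) strictly decreases under switching: at least \(\binom{k-2}{(k-3)/2}\binom{k}{(k+1)/2}\) common neighbours are lost versus \(\binom{k-2}{(k-3)/2}\binom{k-1}{(k-1)/2}\) gained. A changed count for one pair proves nothing by itself; the contradiction comes from vertex-transitivity. If \(\Gamma\cong\Gamma'\), the multiset \(\{\lambda(u,w)\mid u\not\sim w\}\) must be the same in both graphs; it is unchanged for \(u\notin C_1\cup C_2\), hence the multiset \(\{\lambda(u,w)\mid u\in C_1\}\) is invariant; the automorphism induced by \((k+2\ k+3\ \cdots\ 2k)\) fixes \(w\) and acts transitively on \(C_1\setminus\{v,v'\}\) where \(v'=[k-1]\cup\{k+1\}\), and \(\lambda(v,w)=\lambda(v',w)\) by the symmetry swapping \(k\) and \(k+1\) (both lie in \(w\)); since \(k-1\neq2\), multiset equality then forces \(\lambda(v,w)\) to be the same in \(\Gamma\) and \(\Gamma'\), contradicting the strict decrease. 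If you want to salvage your own route instead, you must actually compute \(\max_{i\in S}\lambda_i\) and carry out the surplus estimate at that intersection size, which is a substantially harder calculation than the paper's symmetry argument.
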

\begin{proof}
    Denote \(\Gamma:=J_{\{1,2,\dots\frac{k-1}{2}\}}(2k,k)\). Let \(C_1\) be the set of \(k\)-subsets of \([2k]\) that contain \([k-1]\) and let \(C_2\) be the set of \(k\)-subsets of \([2k]\) that are disjoint with \([k-1]\). There is a bijection from \(C_1\) to \(C_2\), given by the complement.

    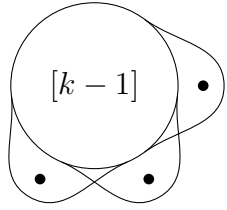
\begin{figure}[H]
        \centering
        \newcommand{\radius}{1.1}
\begin{tikzpicture}
    \draw[color=black] (0,0) node {\([k-1]\)} circle (\radius) {};
    \draw[rotate around={-0:(0,0)}] (.6*\radius,-.8*\radius) to[out=36,in=-90] (1.55*\radius,0) (.6*\radius,.8*\radius) to[out=-36,in=90] (1.55*\radius,0);
    \draw[rotate around={-120:(0,0)}] (.6*\radius,-.8*\radius) to[out=36,in=-90] (1.55*\radius,0) (.6*\radius,.8*\radius) to[out=-36,in=90] (1.55*\radius,0);
    \draw[rotate around={-60:(0,0)}] (.6*\radius,-.8*\radius) to[out=36,in=-90] (1.55*\radius,0) (.6*\radius,.8*\radius) to[out=-36,in=90] (1.55*\radius,0);
    \path[every node/.append style={circle, fill=black, minimum size=4pt,inner sep=0pt}]
    (-0:1.3*\radius) node {} (-120:1.3*\radius) node {} (-60:1.3*\radius) node {};
\end{tikzpicture}
        \caption{Elements of \(C_2\) include \([k-1]\) and one other element.}
    \end{figure}
    
    We first check the conditions of \autoref{thm:gmgeneral} in order to show that \(C_1\) and \(C_2\) form a GM-switching set of \(\Gamma\). It is immediate that \(|C_1|=|C_2|=k+1\). Moreover, every two \(k\)-sets of \(C_1\) (resp.\ \(C_2\)) intersect in \(k-1\) elements, which means that \(C_1\) (resp.\ \(C_2\)) is a coclique. Any vertex in \(C_1\) has \(k\) neighbours in \(C_2\) and vice versa. Choose an arbitrary vertex \(v\notin C_1\cup C_2\) and let \(l\) be the size of its intersection with \([k-1]\). Then \(v\) intersects \(l+1\) vertices of \(C_1\) in \(l\) elements and the other \(k-l\) vertices in \(l+1\) elements. Similarly, \(v\) intersects \(l+1\) vertices of \(C_2\) in \(k-l\) elements and \(k-l\) vertices in \(k-l-1\) elements. So if \(l<\frac{k-1}{2}\), then \(v\) is adjacent to all vertices of \(C_1\) and none of \(C_2\). If \(l=\frac{k-1}{2}\), then it is adjacent to \(\frac{k+1}{2}=\frac{1}{2}|C_1|\) elements of each switching set, and if \(l>\frac{k-1}{2}\), then it is adjacent with no vertices of \(C_1\) and all vertices of \(C_2\). We conclude that \(C_1\) and \(C_2\) fulfil the GM-switching conditions.
    
    Next, we show that the graph \(\Gamma'\) obtained from \(\Gamma\) by GM-switching with respect to \(C_1\) and \(C_2)\), is not isomorphic to \(\Gamma\). Define \(v:=[k]\in C_1\) and \(w:=\{2,3,\dots,k+1\}\notin C_1\cup C_2\).
    
    We claim that \(v\) and \(w\) have strictly less common neighbours after switching. For this, it suffices to count the number of common neighbours that are lost or added during the GM-switching process. Therefore, we only consider those neighbours of \(w\) that have \(\frac{1}{2}|C_1|=\frac{k+1}{2}\) neighbours in \(C_1\), i.e.\ those that contain exactly \(\frac{k-1}{2}\) elements of \([k-1]\).
    First consider the common neighbours that are added. They should not be adjacent to \(v\) in the original graph, which means that they must contain the element \(k\). Adjacency with \(w\) implies that they must also contain the element \(1\) and not the element \(k+1\). Thus, there are
    \[\underbrace{\binom{k-2}{\frac{k-3}{2}}}_{\substack{\text{\(\frac{k-1}{2}\) elements of \([k-1]\),}\\ \text{one of which is \(1\)}}}
    \cdot
    \underbrace{\binom{k-1}{\frac{k-1}{2}}}_{\substack{\text{\(\frac{k+1}{2}\) elements not in \([k-1]\)}\\ \text{one of which is \(k\)}}}\] added common neighbours of \(v\) and \(w\).
    Now consider the common neighbours that are lost. Since these vertices must originally be adjacent to \(v\), they may not contain \(k\). If they contain the element \(1\), then they will automatically be adjacent to \(w\). So there are at least
    \[\underbrace{\binom{k-2}{\frac{k-3}{2}}}_{\substack{\text{\(\frac{k-1}{2}\) elements of \([k-1]\),}\\ \text{one of which is \(1\)}}}
    \cdot
    \underbrace{\binom{k}{\frac{k+1}{2}}}_{\substack{\text{\(\frac{k+1}{2}\) elements}\\ \text{not in \([k]\)}}}
    \]
    common neighbours of \(v\) and \(w\) that are lost during switching, which is already strictly more than the number of added common neighbours.
    
    Now, assume by contradiction that \(\Gamma\cong\Gamma'\). Let \(\lambda(u,w)\) denote the number of common neighbours of \(u\) and \(w\), for any vertex \(u\) and \(w\) defined as before. Since \(\Gamma\) is vertex-transitive, the multiset
    \[\Lambda(w)=\{\lambda(u,w)\mid u\not\sim w\}\]
    remains the same after switching. The value of \(\lambda(u,w)\) stays the same if \(u\notin C_1\cup C_2\), so we can restrict ourselves to vertices in \(C_1\cup C_2\), i.e.\ the multiset
    \[\{\lambda(u,w)\mid u\in C_1\cup C_2, u\not\sim w\}\]
    remains the same. Since \(w\) is adjacent to all vertices of \(C_2\) and none of \(C_1\), this is equivalent to saying that \[\{\lambda(u,w)\mid u\in C_1\}\] is invariant. We now apply the same reasoning as in \cite{cioba}. Define \(v':=[k-1]\cup\{k+1\}\). The automorphism induced by the permutation \((k+2\ k+3\ \cdots 2k)\) fixes \(v\), \(v'\) and \(w\), but shifts the other vertices of \(C_1\) cyclically. So \(\lambda(u,w)\) is constant among all \(u\in C_1\setminus\{v,v'\}\). Together with the observation that \(\lambda(v,w)=\lambda(v',w)\), we see that \(\lambda(v,w)\) is the same for \(\Gamma\) and \(\Gamma'\), a contradiction with the above.
    
    We conclude that \(\Gamma\) and \(\Gamma'\) are cospectral mates. Thus, \(\Gamma\) is NDS.
\end{proof}

\section{\texorpdfstring{\(\boldsymbol{q}\)-Kneser graphs are NDS if \(\boldsymbol{q=2}\)}{q-Kneser graphs are NDS if q=2}}\label{sec:k2nk}

In this section, we prove that the \(q\)-Kneser graph \(K_2(n,k)=J_{2,\{0\}}(n,k)\) is NDS (under the assumption that \(k\geq2\), otherwise the graph is trivially DS) using GM-switching with respect to a switching set of size \(4\). This result can be seen as a \(q\)-analog of \autoref{ramezani}(i) in the case that \(q=2\) (and without the extra condition).

\begin{theorem}\label{thm:k2nk}
    \(K_2(n,k)\) is not determined by its spectrum.
\end{theorem}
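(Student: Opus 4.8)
The plan is to exhibit an explicit Godsil--McKay switching set of size four and then to distinguish the switched graph from \(K_2(n,k)\) by a local invariant. Write \(\Gamma := K_2(n,k)\) and recall the standing assumptions \(k\geq 2\) and \(n\geq 2k\). I would fix a \((k-2)\)-space \(Z\) and a \(3\)-space \(\Pi\) with \(Z\cap\Pi=0\), for instance \(Z=\langle e_4,\dots,e_{k+1}\rangle\) and \(\Pi=\langle e_1,e_2,e_3\rangle\). Inside \(\Pi\cong\mathbb{F}_2^3\) fix a point (\(1\)-space) \(p_0\), and let \(\pi_1,\pi_2,\pi_3,\pi_4\) be the four \(2\)-spaces of \(\Pi\) that do \emph{not} contain \(p_0\) (there are exactly \(7-3=4\) of them). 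The switching set I propose is \(C=\{W_1,W_2,W_3,W_4\}\) with \(W_i:=Z+\pi_i\); each \(W_i\) is a \(k\)-space, and since \(Z\cap\Pi=0\) one checks \(W_i\cap W_j=Z+(\pi_i\cap\pi_j)\), which has dimension \(k-1\) because two distinct planes of \(\Pi\) meet in a point. Hence \(C\) is a coclique, so condition~(i) of \autoref{thm:gmgeneral} holds trivially.

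The heart of the argument is condition~(ii): every \(k\)-space \(v\notin C\) must be adjacent to \(0\), \(2\) or \(4\) of the \(W_i\), i.e. non-adjacent to an even number of them. If \(v\cap Z\neq0\) then \(v\) meets every \(W_i\) and is adjacent to none. Otherwise \(v\cap Z=0\), and passing to \(\overline V:=V/Z\) one checks that \(v\cap W_i=0\) if and only if \(\overline v\cap\pi_i=0\), where \(\overline v\) is the image of \(v\). Thus the number of \(W_i\) that \(v\) hits equals \(\#\{i:W\cap\pi_i\neq0\}\) for \(W:=\overline v\cap\Pi\), and I would prove this is even by running over the subspaces of \(\Pi\cong\mathbb{F}_2^3\): for \(W=0\) it is \(0\); for a point \(W\) it is \(2\) or \(0\) according as \(W\neq p_0\) or \(W=p_0\), using that each point \(\neq p_0\) lies on exactly two of the four planes; and for \(\dim W\geq2\) it is \(4\), since in a \(3\)-space a subspace of dimension \(\geq2\) meets every plane nontrivially. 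As this parity holds with no arithmetic constraint on \(n\) and \(k\), it yields the theorem without the extra hypothesis present in \autoref{ramezani}(i).

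Having verified that \(C\) is a switching set, let \(\Gamma'\) be the graph obtained by GM-switching; it is cospectral with \(\Gamma\) by \autoref{thm:gmgeneral}, and the switching is nontrivial because points of \(\Pi\) distinct from \(p_0\) produce vertices with exactly \(2=\tfrac12|C|\) neighbours in \(C\). To separate \(\Gamma'\) from \(\Gamma\) I would use that \(\Gamma\) is edge-regular: \(\mathrm{GL}_n(2)\) acts transitively on ordered pairs of trivially intersecting \(k\)-spaces, so any two adjacent vertices of \(\Gamma\) have the same number \(\lambda\) of common neighbours. Since an isomorphism preserves the multiset of common-neighbour counts over edges, it suffices to produce a single edge of \(\Gamma'\) whose endpoints have a number of common neighbours different from \(\lambda\). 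I would take \(v\in C\) together with an outside vertex \(w\) having exactly two neighbours in \(C\) that become adjacent to \(v\) after switching, and count common neighbours in \(\Gamma'\): only those common neighbours whose adjacency to \(v\) is flipped during switching contribute a change, and bounding their number should show the count strictly exceeds \(\lambda\), exactly as in the proofs of \autoref{thm:j2n4} and \autoref{thm:j2kk}.

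The main obstacle is condition~(ii): the construction works only because the four chosen planes \emph{doubly cover} their points (every point of \(\Pi\) lies on an even number of them), which is what forces the parity and is special to \(q=2\); arranging the four subspaces to have this self-dual incidence pattern is the crux, after which the reduction to subspaces of \(\mathbb{F}_2^3\) makes the verification routine. The subsequent common-neighbour count is technically the most laborious step, but it follows the template already used in the paper.
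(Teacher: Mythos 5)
Your construction of the switching set and your verification of the Godsil--McKay conditions are correct, and they coincide with the paper's: the four lines \(p_1p_2\), \(p_1p_3\), \(p_2p_3\), \(p_4p_5\) used there are exactly the four lines of the Fano plane avoiding the point \(\langle(1,1,1,0,\dots,0)\rangle\), so your \(C\) is the paper's \(C\) up to a change of coordinates, and your reduction to \(\mathbb{F}_2^3\) via the quotient \(V/Z\) is a clean way of organizing what the paper checks with the six spaces \(p_i\pi\). The genuine gap is the non-isomorphism step, and it is not merely that the count is left undone. The invariant you propose --- an edge of \(\Gamma'\) whose endpoints have a number of common neighbours different from the edge-regular parameter \(\lambda\) of \(\Gamma\) --- provably cannot exist when \(k=2\), a case the theorem must cover (the standing assumption is only \(k\geq2\)). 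Indeed, \(K_2(n,2)\) is the complement of the Grassmann graph \(J_2(n,2)\), hence strongly regular and connected; regularity is determined by the spectrum, connectedness of a regular graph is read off from the multiplicity of the largest eigenvalue, and a connected regular graph with exactly three distinct eigenvalues is strongly regular. So any cospectral mate \(\Gamma'\) of \(K_2(n,2)\) is again strongly regular with the same parameters, every edge of \(\Gamma'\) has exactly \(\lambda\) common neighbours, and the distinguishing edge you seek does not exist. This obstruction is precisely why the \(k=2\) case (equivalently \autoref{ihringer}) needs something finer than pair counts.

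For \(k\geq3\) the graph is not strongly regular, so your strategy is not hopeless there, but as written it is a plan rather than a proof: your pair \(v\in C\), \(w\notin C\) is non-adjacent in \(\Gamma\), so the baseline to compare against \(\lambda\) is the common-neighbour count of an intersecting pair of \(k\)-spaces (which depends on \(\dim(v\cap w)\) and already differs from \(\lambda\)), and the gained and lost common neighbours must then be counted exactly; the proof of \autoref{thm:j2n4}, where the generic difference \(\binom{n-7}{2}\) vanishes at \(n=8\) and a second pair of vertices has to be found, shows that such counts can degenerate and cannot be taken on faith. The paper avoids both problems by using a triple invariant, which strong regularity does not control: in \(\Gamma'\), for the pairwise non-adjacent vertices \(p_1\tau\), \(p_2\tau\), \(p_4\tau\) (with \(\tau\) a \((k-1)\)-space meeting \(p_1p_2p_3\pi\) trivially), there is exactly one vertex adjacent to \(p_1\tau\) and non-adjacent to both of the others, namely \(p_1p_3\pi\in C\); whereas in \(\Gamma\) no triple of pairwise non-adjacent vertices \(\alpha,\beta,\gamma\) admits exactly one such witness, because any witness \(\delta\) contains a line \(pq\) with \(p\in\beta\cap\delta\) and \(q\in\gamma\cap\delta\), this line avoids \(\alpha\), and every \(k\)-space through \(pq\) avoiding \(\alpha\) is again a witness. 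To salvage your write-up, replace the final step by an invariant of this kind, or at least restrict the claim to \(k\geq3\) and carry out the computation in full.
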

\begin{proof}
    Let \(\Gamma:=K_2(n,k)\). Define the following \(1\)-spaces in \(\mathbb{F}_2^n\) (drawn projectively, as points in \(PG(n-1,2)\)).\\[3mm]
    \begin{minipage}{.39\textwidth}
    \begin{itemize}
        \item \(p_1 := \langle(1,0,0,0,0,\dots,0)\rangle\)\vspace{-1mm}
        \item \(p_2 := \langle(0,1,0,0,0,\dots,0)\rangle\)\vspace{-1mm}
        \item \(p_3 := \langle(0,0,1,0,0,\dots,0)\rangle\)\vspace{-1mm}
        \item \(p_4 := \langle(1,1,0,0,0,\dots,0)\rangle\)\vspace{-1mm}
        \item \(p_5 := \langle(1,0,1,0,0,\dots,0)\rangle\)\vspace{-1mm}
        \item \(p_6 := \langle(0,1,1,0,0,\dots,0)\rangle\)\vspace{-1mm}
    \end{itemize}
    \end{minipage}
    \begin{minipage}{.6\textwidth}
        \centering
        \newcommand{\radius}{2.3}
\vspace{-5mm}
\begin{tikzpicture}[remember picture]
\draw (-30:\radius) -- coordinate (P1)
(90:\radius) -- coordinate (P2)
(210:\radius) -- coordinate (P3) cycle;
\draw[black!20] (210:\radius) -- (P1) (-30:\radius) -- (P2) (90:\radius) -- (P3);
\node[draw] at (0,0) [circle through=(P1)] {};
\path[every node/.append style={circle, fill=black, minimum size=5pt, label distance=0pt, inner sep=0pt}]
(0,0) node[color=black!20] {}
(P1) node[label={[label distance=5pt]0:\(p_5\)}] {}
(P2) node[label={[label distance=5pt]180:\(p_4\)}] {}
(P3) node[label={[label distance=0pt]270:\(p_6\)}] {}
(-30:\radius) node[label={[label distance=0pt]-40:\(p_3\)}] {}
(90:\radius) node[label={[label distance=0pt]90:\(p_1\)}] {}
(210:\radius) node[label={[label distance=0pt]220:\(p_2\)}] {};
\draw (1.8*\radius,.3*\radius) node[] {\large\(\pi\)} ellipse ({.45*\radius} and .9*\radius) {};
\draw (1.8*\radius,-.71*\radius) node {\small\([k-2]\)};
\end{tikzpicture}
    \end{minipage}\\[5mm]
    Let \(\pi\) be a \((k-2)\)-space that has a trivial intersection with the \(3\)-space (projective plane) spanned by these points. Define \(C:=\{p_1p_2\pi,\;p_1p_3\pi,\;p_2p_3\pi,\;p_4p_5\pi\}\).
    
    We first prove that \(C\) is a GM-switching set of \(\Gamma\) by checking the conditions of \autoref{thm:gmgeneral} (where \(k=1\)). Every element of \(C\) meets every other element of \(C\) in a \(k-1\)-space, so \(C\) is a coclique and therefore regular. Choose an arbitrary \(k\)-space not in \(C\) and let \(R:=\{p_1\pi,p_2\pi,\dots,p_6\pi\}\). 
    If the chosen \(k\)-space intersects all elements of \(R\) trivially, then it also intersects all spaces in \(C\) trivially, which means that it is adjacent to all the corresponding vertices of the \(q\)-Kneser graph. If it intersects just one of the elements of \(R\) non-trivially, then it is adjacent to exactly \(2\) vertices of \(C\). If the \(k\)-space intersects at least two elements of \(R\) non-trivially, then it either intersects \(\pi\) nontrivially, or it contains a \(2\)-space in the \(k+1\)-space \(p_1p_2p_3\pi\). Either case, it intersects every element of \(C\) non-trivially, therefore being adjacent to no element of \(C\).
    
    Let \(\Gamma'\) be the graph obtained from \(\Gamma\) by GM-switching with respect to \(C\). We prove that \(\Gamma\) and \(\Gamma'\) are non-isomorphic.
    Let \(\tau\) be a \((k-1)\)-space that intersects \(p_1p_2p_3\pi\) trivially and consider the pairwise non-adjacent \(k\)-spaces \(p_1\tau\), \(p_2\tau\) and \(p_4\tau\) in \(\Gamma'\). Then there is exactly one \(k\)-space that is adjacent to \(p_1\tau\) but not adjacent to both \(p_2\tau\) and \(p_4\tau\) in \(\Gamma'\). This is the \(k\)-space \(p_1p_3\pi\). Indeed, a \(k\)-space outside \(C\) that intersects both \(p_2\tau\) and \(p_4\tau\), must intersect \(p_1\tau\) as well. So the only spaces that can meet this property are in \(C\). Since the switching reverses adjacency for \(p_1\tau\), \(p_2\tau\) and \(p_4\tau\), we get that \(p_1p_3\pi\) is adjacent to \(p_1\tau\) and not adjacent to \(p_2\tau\) and \(p_4\tau\), while the other elements of \(C\) are not.
    \vspace{4mm}
    \begin{figure}[H]
        \centering
        \newcommand{\radius}{2.2}
\vspace{-5mm}
\begin{tikzpicture}[remember picture]
\draw (-30:\radius) -- coordinate (P1)
(90:\radius);
\draw (90:\radius) -- coordinate (P2)
(210:\radius) -- coordinate (P3) (-30:\radius);
\draw[black!20] (210:\radius) -- (P1) (-30:\radius) -- (P2) (90:\radius) -- (P3);
\node[draw] at (0,0) [circle through=(P1)] {};

\draw (90:\radius) edge (-2*\radius,1.2*\radius) edge (-1.8*\radius,-.72*\radius);
\draw (210:\radius) edge (-1.7*\radius,1*\radius) edge (-1.9*\radius,-.78*\radius)
(P2) edge (-1.84*\radius,1.15*\radius) edge (-1.85*\radius,-.75*\radius);
\draw (90:\radius) edge (1.8*\radius,1.2*\radius)
(-30:\radius) edge (1.8*\radius,-.6*\radius);

\path[every node/.append style={circle, fill=black, minimum size=5pt, label distance=0pt, inner sep=0pt}]
(0,0) node[color=black!20] {}
(P1) node[label={[label distance=5pt]0:\(p_5\)}] {}
(P2) node[label={[label distance=5pt]180:\(p_4\)}] {}
(P3) node[label={[label distance=0pt]270:\(p_6\)}] {}
(-30:\radius) node[label={[label distance=0pt]-40:\(p_3\)}] {}
(90:\radius) node[label={[label distance=0pt]90:\(p_1\)}] {}
(210:\radius) node[label={[label distance=0pt]220:\(p_2\)}] {};

\draw (1.8*\radius,.3*\radius) node[] {\large\color{black}\(\pi\)} ellipse ({.45*\radius} and .9*\radius) {};
\draw (-2*\radius,.2*\radius) node {\large\color{black}\(\tau\)} ellipse ({.5*\radius} and 1*\radius) {};
\draw (-2*\radius,-.9*\radius) node {\small\([k-1]\)};
\draw (1.8*\radius,-.71*\radius) node {\small\([k-2]\)};

\end{tikzpicture}
    \end{figure}
    \vspace{-3mm}
    Now consider three arbitrarily chosen non-adjacent \(k\)-spaces in \(\Gamma\). Call them \(\alpha\), \(\beta\) and \(\gamma\). Non-adjacency here means that they intersect one another. We prove that the number of \(k\)-spaces that intersects \(\alpha\) trivially but intersects both \(\beta\) and \(\gamma\) non-trivially, is never equal to one. Let \(\delta\) be such a space (if it does not exist, we are done). Choose two \(1\)-spaces (projective points) \(p\in\beta\cap\delta\) and \(q\in\gamma\cap\delta\). The \(2\)-space (projective line) \(pq\) does not intersect \(\alpha\), as it lies in \(\delta\). There are \(2^{k(k-2)}\gauss{n-k-1}{k-2}{2}>1\) different \(k\)-spaces through \(pq\) that intersect \(\alpha\) trivially, and they all meet the above property.
    \vspace{-1mm}
    \begin{figure}[H]
        \centering
        \begin{tikzpicture}[scale=.9, remember picture]
    \path[every node/.append style={circle, fill=black, minimum size=5pt, label distance=-1pt, inner sep=0pt}]
    (1.25,.7) node[label={180:\(p\)}] (X) {}
    (1.25,-.7) node[label={180:\(q\)}] (Y) {};
    \draw (-2,0) node {\(\alpha\)} circle (1.5)
    (0,1) node {\(\beta\)} circle (1.5)
    (0,-1) node {\(\gamma\)} circle (1.5)
    (2,0) node {\(\delta\)} circle (1.5)
    (X) edge (Y);
\end{tikzpicture}
        \label{fig:q2switch2}
    \end{figure}
    \vspace{-4mm}
    Since the number of \(k\)-spaces with this property is different for \(\Gamma\) and \(\Gamma'\), while it should be invariant under isomorphism, we conclude that \(\Gamma\) and \(\Gamma'\) are not isomorphic.
\end{proof}

\section{Open problems}\label{sec:computational}






Besides the three infinite families of graphs that are NDS, we also found three ``sporadic'' generalized Johnson graphs with a cospectral mate. For each of these three graphs, we provide a pair of switching sets that produces a cospectral mate, see the table below. The fact that they are non-isomorphic was checked by computer.

\vspace{5mm}
\noindent\scalebox{0.923}{\begin{tabular}{|l|c|l|}
    \hline
    graph & method & switching sets\\
    \hline
    \(J_{\{1\}}(11,4)\) & WQH & \thead[cl]{\(C_1=\left\{\{1,2,3,10\},\{4,5,6,10\},\{7,8,9,10\}\right\}\)\\ \(C_2=\left\{\{1,2,3,11\},\{4,5,6,11\},\{7,8,9,11\}\right\}\)}\\
    \hline
    \(J_{\{2,4\}}(10,5)\) & GM & \thead[cl]{\(C_1=\left\{\{1,2,3,4,5\},\{1,2,3,6,7\},\{1,2,4,6,8\},\{1,2,5,7,8\}\right\}\)\\ \(C_2=\left\{\{6,7,8,9,10\},\{4,5,8,9,10\},\{3,5,7,9,10\},\{3,4,6,9,10\}\right\}\)}\\
    \hline
    \(J_{\{2,4\}}(12,6)\) & GM & \thead[cl]{\(C_1=\left\{\{1,2,3,4,5,6\},\{1,2,3,4,7,8\},\{1,2,3,5,7,9\},\{1,2,3,6,8,9\}\right\}\)\\ \(C_2=\left\{\{7,8,9,10,11,12\},\{5,6,9,10,11,12\},\{4,6,8,10,11,12\},\{4,5,7,10,11,12\}\right\}\)}\\
    \hline
\end{tabular}}
\vspace{5mm}

It would be nice to see if any of these switching sets can be extended to an infinite family of generalized Johnson graphs that are NDS.
We believe that \(J_{\{2,4,\dots\}}(2k,k)\) is NDS if \(k\geq4\), following a similar argument as in \cite{ramezani}, but with two switching sets instead of one.

More generally, it remains an open problem to determine the cospectrality of other graphs in the Johnson and Grassmann schemes (entries ``?'' in the tables in the \hyperref[appendix]{Appendix}). The smallest open case is \(K(9,3)\), see also \cite[Section~5]{cioba}.

\subsection*{Acknowledgements}
Aida Abiad is partially supported by the Dutch Research Council through the grant VI.Vidi.213.085 and by the Research Foundation Flanders through the grant 1285921N.
Jozefien D’haeseleer is supported by the Research Foundation Flanders through the grant 1218522N.


\newpage

\section*{Appendix: overview of results}\label{appendix}

This appendix provides a structured overview of the currently known cospectrality results for some small generalized Johnson and Grassmann graphs. New results are indicated with an asterisk (* or **).\\[-5mm]

\begin{center}
\begin{tabular}{lll}
    \textbf{Legend:} & \colorbox{trivial}{\(^0\)Trivial} & \colorbox{chang}{\(^1\)Hoffman/Chang (1959)}\\[1mm]
    \colorbox{liu}{\(^2\)Huang, Liu (1999)} & \colorbox{spence}{\(^3\)Van Dam et al. (2006)} & \colorbox{ramezani}{\(^4\)Haemers, Ramezani (2010)}\\[1mm]
    \colorbox{cioba}{\(^5\)Cioab{\u{a}} et al. (2018)} & \colorbox{new}{*\autoref{thm:j2n4}} & \colorbox{sporadic}{**Sporadic (\rref{Section}{sec:computational})}
\end{tabular}
\end{center}
\vfill
\begin{table}[H]
    \centering
    \begin{tabular}{|c|cV{5}cV{5}c|}
    \hline
    \multicolumn{2}{|cV{5}}{\multirow{2}{*}{\({J_S(n,2)}\)}} & \multicolumn{1}{cV{5}}{\(S\)} & \multicolumn{1}{c|}{\multirow{2}{*}{\(|V|\)}}\\
    \cline{3-3}
    \multicolumn{2}{|cV{5}}{} & \(\{0\}\) & \\
    \hlineB{5}
    \multirow{6}{*}{\(n\)} & 4 & \cellcolor{trivial}DS\(^0\) & 6\\
    \cline{2-4}
    & 5 & \cellcolor{chang}\hyperref[chang]{DS\(^1\)} & 10\\
    \cline{2-4}
    & 6 & \cellcolor{chang}\hyperref[chang]{DS\(^1\)} & 15\\
    \cline{2-4}
    & 7 & \cellcolor{chang}\hyperref[chang]{DS\(^1\)} & 21\\
    \cline{2-4}
    & 8 & \cellcolor{chang}\hyperref[chang]{NDS\(^1\)} & 28\\
    \cline{2-4}
    & 9 & \cellcolor{chang}\hyperref[chang]{DS\(^1\)} & 36\\
    \hline
    \end{tabular}
    \caption{Cospectrality of small generalized Johnson graphs with \(k=2\).}
    \label{tab:johnson2}
\end{table}
\begin{table}[H]
    \centering
    \begin{tabular}{|c|cV{5}c|c|cV{5}c|}
    \hline
    \multicolumn{2}{|cV{5}}{\multirow{2}{*}{\({J_S(n,3)}\)}} & \multicolumn{3}{cV{5}}{\(S\)} & \multicolumn{1}{c|}{\multirow{2}{*}{\(|V|\)}}\\
    \cline{3-5}
    \multicolumn{2}{|cV{5}}{} & \(\{0\}\) & \(\{1\}\) & \(\{2\}\) & \\
    \hlineB{5}
    \multirow{6}{*}{\(n\)} & 6 & \cellcolor{trivial}DS\(^0\) & \cellcolor{ramezani}\hyperref[ramezani]{NDS\(^4\)} & \cellcolor{spence}\hyperref[spence]{NDS\(^3\)} & 20\\
    \cline{2-6}
    & 7 & \cellcolor{liu}\hyperref[liu]{DS\(^2\)} & \cellcolor{ramezani}\hyperref[ramezani]{NDS\(^4\)} & \cellcolor{spence}\hyperref[spence]{NDS\(^3\)} & 35\\
    \cline{2-6}
    & 8 & \cellcolor{ramezani}\hyperref[ramezani]{NDS\(^4\)} & \cellcolor{ramezani}\hyperref[ramezani]{NDS\(^4\)} & \cellcolor{spence}\hyperref[spence]{NDS\(^3\)} & 56\\
    \cline{2-6}
    & 9 & ? & \cellcolor{ramezani}\hyperref[ramezani]{NDS\(^4\)} & \cellcolor{spence}\hyperref[spence]{NDS\(^3\)} & 84\\
    \cline{2-6}
    & 10 & ? & \cellcolor{ramezani}\hyperref[ramezani]{NDS\(^4\)} & \cellcolor{spence}\hyperref[spence]{NDS\(^3\)} & 120\\
    \cline{2-6}
    & 11 & ? & \cellcolor{ramezani}\hyperref[ramezani]{NDS\(^4\)} & \cellcolor{spence}\hyperref[spence]{NDS\(^3\)} & 165\\
    \hline
    \end{tabular}
    \caption{Cospectrality of small generalized Johnson graphs with \(k=3\).}
    \label{tab:johnson3}
\end{table}
\begin{table}[H]
    \centering
    \begin{tabular}{|c|cV{5}c|c|c|c|c|c|cV{5}c|}
    \hline
    \multicolumn{2}{|cV{5}}{\multirow{2}{*}{\({J_S(n,4)}\)}} & \multicolumn{7}{cV{5}}{\(S\)} & \multicolumn{1}{c|}{\multirow{2}{*}{\(|V|\)}}\\
    \cline{3-9}
    \multicolumn{2}{|cV{5}}{} & \(\{0\}\) & \(\{1\}\) & \(\{2\}\) & \(\{3\}\) & \(\{0,1\}\) & \(\{0,2\}\) & \(\{0,3\}\) & \\
    \hlineB{5}
    \multirow{6}{*}{\(n\)} & 8 & \cellcolor{trivial}DS\(^0\) & ? & \cellcolor{new}\hyperref[thm:j2n4]{NDS*} & \cellcolor{spence}\hyperref[spence]{NDS\(^3\)} & ? & \cellcolor{ramezani}\hyperref[ramezani]{NDS\(^4\)} & ? & 70\\
    \cline{2-10}
    & 9 & \cellcolor{liu}\hyperref[liu]{DS\(^2\)} & ? & \cellcolor{new}\hyperref[thm:j2n4]{NDS*} & \cellcolor{spence}\hyperref[spence]{NDS\(^3\)} & \cellcolor{cioba}\hyperref[cioba]{NDS\(^5\)} & \cellcolor{ramezani}\hyperref[ramezani]{NDS\(^4\)} & ? & 126\\
    \cline{2-10}
    & 10 & ? & ? & \cellcolor{new}\hyperref[thm:j2n4]{NDS*} & \cellcolor{spence}\hyperref[spence]{NDS\(^3\)} & ? & \cellcolor{ramezani}\hyperref[ramezani]{NDS\(^4\)} & ? & 210\\
    \cline{2-10}
    & 11 & \cellcolor{ramezani}\hyperref[ramezani]{NDS\(^4\)} & \cellcolor{sporadic}\hyperref[sec:computational]{NDS**} & \cellcolor{new}\hyperref[thm:j2n4]{NDS*} & \cellcolor{spence}\hyperref[spence]{NDS\(^3\)} & ? & \cellcolor{ramezani}\hyperref[ramezani]{NDS\(^4\)} & ? & 330\\
    \cline{2-10}
    & 12 & ? & ? & \cellcolor{new}\hyperref[thm:j2n4]{NDS*} & \cellcolor{spence}\hyperref[spence]{NDS\(^3\)} & ? & \cellcolor{ramezani}\hyperref[ramezani]{NDS\(^4\)} & ? & 495\\
    \cline{2-10}
    & 13 & ? & ? & \cellcolor{new}\hyperref[thm:j2n4]{NDS*} & \cellcolor{spence}\hyperref[spence]{NDS\(^3\)} & ? & \cellcolor{ramezani}\hyperref[ramezani]{NDS\(^4\)} & ? & 715\\
    \hline
    \end{tabular}
    \caption{Cospectrality of small generalized Johnson graphs with \(k=4\).}
    \label{tab:johnson4}
\end{table}

\begin{landscape}
   \begin{center}
\begin{tabular}{lllll}
    \textbf{Legend:} & \colorbox{trivial}{\(^0\)Trivial} & \colorbox{liu}{\(^2\)Huang, Liu (1999)} & \colorbox{koolen}{\(^2\)Van Dam, Koolen (2005)} & \colorbox{spence}{\(^3\)Van Dam et al. (2006)}\\
    \colorbox{ramezani}{\(^4\)Haemers, Ramezani (2010)} & \colorbox{cioba}{\(^5\)Cioab{\u{a}} et al. (2018)} & \colorbox{ihringer}{\(^6\)Ihringer, Munemasa (2019)} & \colorbox{new}{*\autoref{thm:j2kk}} & \colorbox{sporadic}{**Sporadic (\rref{Section}{sec:computational})}
\end{tabular}
\end{center}
    \vfill
    {\tiny{
\begin{table}[H]
    \centering
    \begin{tabular}{|c|cV{5}c|c|c|c|c|c|c|c|c|c|c|c|c|c|cV{5}c|}
    \hline
    \multicolumn{2}{|cV{5}}{\multirow{2}{*}{\({J_S(n,5)}\)}} & \multicolumn{15}{cV{5}}{\(S\)} & \multicolumn{1}{c|}{\multirow{2}{*}{\(|V|\)}}\\
    \cline{3-17}
    \multicolumn{2}{|cV{5}}{} & \(\{0\}\) & \(\{1\}\) & \(\{2\}\) & \(\{3\}\) & \(\{4\}\) & \(\{0,1\}\) & \(\{0,2\}\) & \(\{0,3\}\) & \(\{0,4\}\) & \(\{1,2\}\) & \(\{1,3\}\) & \(\{1,4\}\) & \(\{2,3\}\) & \(\{2,4\}\) & \(\{3,4\}\) & \\
    \hlineB{5}
    \multirow{6}{*}{\(n\)} & 10 & \cellcolor{trivial}DS\(^0\) & ? & ? & ? & \cellcolor{spence}\hyperref[ramezani]{NDS\(^3\)} & ? & ? & ? & ? & \cellcolor{new}\hyperref[thm:j2kk]{NDS*} & \cellcolor{ramezani}\hyperref[ramezani]{NDS\(^4\)} & ? & ? & \cellcolor{sporadic}\hyperref[sec:computational]{NDS**} & \cellcolor{cioba}\hyperref[cioba]{NDS\(^5\)} & 252\\
    \cline{2-18}
    & 11 & \cellcolor{liu}\hyperref[liu]{DS\(^1\)} & ? & ? & ? & \cellcolor{spence}\hyperref[ramezani]{NDS\(^3\)} & ? & ? & ? & ? & ? & \cellcolor{ramezani}\hyperref[ramezani]{NDS\(^4\)} & ? & ? & ? & \cellcolor{cioba}\hyperref[cioba]{NDS\(^5\)} & 462\\
    \cline{2-18}
    & 12 & ? & ? & ? & ? & \cellcolor{spence}\hyperref[ramezani]{NDS\(^3\)} & \cellcolor{cioba}\hyperref[cioba]{NDS\(^5\)} & ? & ? & ? & ? & \cellcolor{ramezani}\hyperref[ramezani]{NDS\(^4\)} & ? & ? & ? & \cellcolor{cioba}\hyperref[cioba]{NDS\(^5\)} & 792\\
    \cline{2-18}
    & 13 & ? & ? & ? & ? & \cellcolor{spence}\hyperref[ramezani]{NDS\(^3\)} & ? & ? & ? & ? & ? & \cellcolor{ramezani}\hyperref[ramezani]{NDS\(^4\)} & ? & ? & ? & \cellcolor{cioba}\hyperref[cioba]{NDS\(^5\)} & 1287\\
    \cline{2-18}
    & 14 & \cellcolor{ramezani}\hyperref[ramezani]{NDS\(^4\)} & ? & ? & ? & \cellcolor{spence}\hyperref[ramezani]{NDS\(^3\)} & ? & ? & ? & ? & ? & \cellcolor{ramezani}\hyperref[ramezani]{NDS\(^4\)} & ? & ? & ? & \cellcolor{cioba}\hyperref[cioba]{NDS\(^5\)} & 2002\\
    \cline{2-18}
    & 15 & ? & ? & ? & ? & \cellcolor{spence}\hyperref[ramezani]{NDS\(^3\)} & ? & ? & ? & ? & ? & \cellcolor{ramezani}\hyperref[ramezani]{NDS\(^4\)} & ? & ? & ? & \cellcolor{cioba}\hyperref[cioba]{NDS\(^5\)} & 3003\\
    \hline
    \end{tabular}
    \caption{Cospectrality of small generalized Johnson graphs with \(k=5\).}
    \label{tab:johnson5}
\end{table}
}}
    \vfill
    \begin{table}[H]
    \centering
    \begin{tabular}{|c|cV{5}cV{5}cV{5}cV{5}cV{5}cV{5}c|}
    \hline
    \multicolumn{2}{|cV{5}}{\multirow{3}{*}{\({J_{q,S}(n,2)}\)}} & \multicolumn{2}{cV{5}}{\(q=2\)} & \multicolumn{2}{cV{5}}{\(q=3\)} & \multicolumn{2}{c|}{\(q=4\)}\\
    \cline{3-8}
    \multicolumn{2}{|cV{5}}{} & \multicolumn{1}{cV{5}}{\(S\)} & \multicolumn{1}{cV{5}}{\multirow{2}{*}{\(|V|\)}} & \multicolumn{1}{cV{5}}{\(S\)} & \multicolumn{1}{cV{5}}{\multirow{2}{*}{\(|V|\)}} & \multicolumn{1}{cV{5}}{\(S\)} & \multicolumn{1}{c|}{\multirow{2}{*}{\(|V|\)}}\\
    \cline{3-3}\cline{5-5}\cline{7-7}
    \multicolumn{2}{|cV{5}}{} & \(\{0\}\) & & \(\{0\}\) & & \(\{0\}\) & \\
    \hlineB{5}
    \multirow{6}{*}{\(n\)} & 4 & \cellcolor{ihringer}\hyperref[ihringer]{NDS\(^6\)} & 35 & \cellcolor{ihringer}\hyperref[ihringer]{NDS\(^6\)} & 130 & \cellcolor{ihringer}\hyperref[ihringer]{NDS\(^6\)} & 357\\
    \cline{2-8}
    & 5 & \cellcolor{ihringer}\hyperref[ihringer]{NDS\(^6\)} & 155 & \cellcolor{koolen}\hyperref[koolen]{NDS\(^2\)} & 1210 & \cellcolor{koolen}\hyperref[koolen]{NDS\(^2\)} & 5797\\
    \cline{2-8}
    & 6 & \cellcolor{ihringer}\hyperref[ihringer]{NDS\(^6\)} & 651 & \cellcolor{ihringer}\hyperref[ihringer]{NDS\(^6\)} & 11 011 & \cellcolor{ihringer}\hyperref[ihringer]{NDS\(^6\)} & 93 093\\
    \cline{2-8}
    & 7 & \cellcolor{ihringer}\hyperref[ihringer]{NDS\(^6\)} & 2667 & \cellcolor{ihringer}\hyperref[ihringer]{NDS\(^6\)} & 99 463 & \cellcolor{ihringer}\hyperref[ihringer]{NDS\(^6\)} & \(\approx10^6\)\\
    \cline{2-8}
    & 8 & \cellcolor{ihringer}\hyperref[ihringer]{NDS\(^6\)} & 10 795 & \cellcolor{ihringer}\hyperref[ihringer]{NDS\(^6\)} & \(\approx10^6\) & \cellcolor{ihringer}\hyperref[ihringer]{NDS\(^6\)} & \(\approx10^7\)\\
    \cline{2-8}
    & 9 & \cellcolor{ihringer}\hyperref[ihringer]{NDS\(^6\)} & 43 435 & \cellcolor{ihringer}\hyperref[ihringer]{NDS\(^6\)} & \(\approx10^7\) & \cellcolor{ihringer}\hyperref[ihringer]{NDS\(^6\)} & \(\approx10^8\)\\
    \hline
    \end{tabular}
    \caption{Cospectrality of small generalized Grassmann graphs with \(k=2\) and \(q=2,3,4\).}
    \label{tab:grassmann2}
\end{table}
    \vfill
    \newpage
    \begin{center}
\begin{tabular}{lll}
    \textbf{Legend:} & \colorbox{spence}{\(^1\)Van Dam et al. (2006)} & \colorbox{new}{*\autoref{thm:k2nk}}
\end{tabular}
\end{center}
    \vfill
    \begin{table}[H]
    \centering
    \begin{tabular}{|c|cV{5}c|c|cV{5}cV{5}c|c|cV{5}cV{5}c|c|cV{5}c|}
    \hline
    \multicolumn{2}{|cV{5}}{\multirow{3}{*}{\({J_{q,S}(n,3)}\)}} & \multicolumn{4}{cV{5}}{\(q=2\)} & \multicolumn{4}{cV{5}}{\(q=3\)} & \multicolumn{4}{c|}{\(q=4\)}\\
    \cline{3-14}
    \multicolumn{2}{|cV{5}}{} & \multicolumn{3}{cV{5}}{\(S\)} & \multicolumn{1}{cV{5}}{\multirow{2}{*}{\(|V|\)}} & \multicolumn{3}{cV{5}}{\(S\)} & \multicolumn{1}{cV{5}}{\multirow{2}{*}{\(|V|\)}} & \multicolumn{3}{cV{5}}{\(S\)} & \multicolumn{1}{c|}{\multirow{2}{*}{\(|V|\)}}\\
    \cline{3-5}\cline{7-9}\cline{11-13}
    \multicolumn{2}{|cV{5}}{} & \(\{0\}\) & \(\{1\}\) & \(\{2\}\) &  & \(\{0\}\) & \(\{1\}\) & \(\{2\}\) &  & \(\{0\}\) & \(\{1\}\) & \(\{2\}\) & \\
    \hlineB{5}
    \multirow{6}{*}{\(n\)} & 6 & \cellcolor{new}\hyperref[thm:k2nk]{NDS*} & ? & \cellcolor{spence}\hyperref[spence]{NDS\(^1\)} & 1395 & ? & ? & \cellcolor{spence}\hyperref[spence]{NDS\(^1\)} & 33 880 & ? & ? & \cellcolor{spence}\hyperref[spence]{NDS\(^1\)} & 376 805\\
    \cline{2-14}
    & 7 & \cellcolor{new}\hyperref[thm:k2nk]{NDS*} & ? & \cellcolor{spence}\hyperref[spence]{NDS\(^1\)} & 11 811 & ? & ? & \cellcolor{spence}\hyperref[spence]{NDS\(^1\)} & \(\approx10^6\) & ? & ? & \cellcolor{spence}\hyperref[spence]{NDS\(^1\)} & \(\approx10^7\)\\
    \cline{2-14}
    & 8 & \cellcolor{new}\hyperref[thm:k2nk]{NDS*} & ? & \cellcolor{spence}\hyperref[spence]{NDS\(^1\)} & 97 155 & ? & ? & \cellcolor{spence}\hyperref[spence]{NDS\(^1\)} & \(\approx10^7\) & ? & ? & \cellcolor{spence}\hyperref[spence]{NDS\(^1\)} & \(\approx10^9\)\\
    \cline{2-14}
    & 9 & \cellcolor{new}\hyperref[thm:k2nk]{NDS*} & ? & \cellcolor{spence}\hyperref[spence]{NDS\(^1\)} & \(\approx10^6\) & ? & ? & \cellcolor{spence}\hyperref[spence]{NDS\(^1\)} & \(\approx10^9\) & ? & ? & \cellcolor{spence}\hyperref[spence]{NDS\(^1\)} & \(\approx10^{11}\)\\
    \cline{2-14}
    & 10 & \cellcolor{new}\hyperref[thm:k2nk]{NDS*} & ? & \cellcolor{spence}\hyperref[spence]{NDS\(^1\)} & \(\approx10^7\) & ? & ? & \cellcolor{spence}\hyperref[spence]{NDS\(^1\)} & \(\approx10^{10}\) & ? & ? & \cellcolor{spence}\hyperref[spence]{NDS\(^1\)} & \(\approx10^{13}\)\\
    \cline{2-14}
    & 11 & \cellcolor{new}\hyperref[thm:k2nk]{NDS*} & ? & \cellcolor{spence}\hyperref[spence]{NDS\(^1\)} & \(\approx10^8\) & ? & ? & \cellcolor{spence}\hyperref[spence]{NDS\(^1\)} & \(\approx10^{11}\) & ? & ? & \cellcolor{spence}\hyperref[spence]{NDS\(^1\)} & \(\approx10^{14}\)\\
    \hline
    \end{tabular}
    \caption{Cospectrality of small generalized Grassmann graphs with \(k=3\) and \(q=2,3,4\).}
    \label{tab:grassmann3}
\end{table}
    \vfill
    \begin{table}[H]
    \centering
    \scalebox{0.75}{
    \begin{tabular}{|c|cV{5}c|c|c|c|c|c|cV{5}cV{5}c|c|c|c|c|c|cV{5}cV{5}c|c|c|c|c|c|cV{5}c|}
    \hline
    \multicolumn{2}{|cV{5}}{\multirow{3}{*}{\({J_{q,S}(n,4)}\)}} & \multicolumn{8}{cV{5}}{\(q=2\)} & \multicolumn{8}{cV{5}}{\(q=3\)} & \multicolumn{8}{c|}{\(q=4\)}\\
    \cline{3-26}
    \multicolumn{2}{|cV{5}}{} & \multicolumn{7}{cV{5}}{\(S\)} & \multicolumn{1}{cV{5}}{\multirow{2}{*}{\(|V|\)}} & \multicolumn{7}{cV{5}}{\(S\)} & \multicolumn{1}{cV{5}}{\multirow{2}{*}{\(|V|\)}} & \multicolumn{7}{cV{5}}{\(S\)} & \multicolumn{1}{c|}{\multirow{2}{*}{\(|V|\)}}\\
    \cline{3-9}\cline{11-17}\cline{19-25}
    \multicolumn{2}{|cV{5}}{} & \(\{0\}\) & \(\{1\}\) & \(\{2\}\) & \(\{3\}\) & \(\{0,1\}\) & \(\{0,2\}\) & \(\{0,3\}\) & & \(\{0\}\) & \(\{1\}\) & \(\{2\}\) & \(\{3\}\) & \(\{0,1\}\) & \(\{0,2\}\) & \(\{0,3\}\) & & \(\{0\}\) & \(\{1\}\) & \(\{2\}\) & \(\{3\}\) & \(\{0,1\}\) & \(\{0,2\}\) & \(\{0,3\}\) & \\
    \hlineB{5}
    \multirow{6}{*}{\(n\)} & 8 & \cellcolor{new}\hyperref[thm:k2nk]{NDS*} & ? & ? & \cellcolor{spence}\hyperref[spence]{NDS\(^1\)} & ? & ? & ? & 200 787 & ? & ? & ? & \cellcolor{spence}\hyperref[spence]{NDS\(^1\)} & ? & ? & ? & \(\approx10^8\) & ? & ? & ? & \cellcolor{spence}\hyperref[spence]{NDS\(^1\)} & ? & ? & ? & \(\approx10^{10}\)\\
    \cline{2-26}
    & 9 & \cellcolor{new}\hyperref[thm:k2nk]{NDS*} & ? & ? & \cellcolor{spence}\hyperref[spence]{NDS\(^1\)} & ? & ? & ? & \(\approx10^6\) & ? & ? & ? & \cellcolor{spence}\hyperref[spence]{NDS\(^1\)} & ? & ? & ? & \(\approx10^{10}\) & ? & ? & ? & \cellcolor{spence}\hyperref[spence]{NDS\(^1\)} & ? & ? & ? & \(\approx10^{12}\)\\
    \cline{2-26}
    & 10 & \cellcolor{new}\hyperref[thm:k2nk]{NDS*} & ? & ? & \cellcolor{spence}\hyperref[spence]{NDS\(^1\)} & ? & ? & ? & \(\approx10^8\) & ? & ? & ? & \cellcolor{spence}\hyperref[spence]{NDS\(^1\)} & ? & ? & ? & \(\approx10^{12}\) & ? & ? & ? & \cellcolor{spence}\hyperref[spence]{NDS\(^1\)} & ? & ? & ? & \(\approx10^{14}\)\\
    \cline{2-26}
    & 11 & \cellcolor{new}\hyperref[thm:k2nk]{NDS*} & ? & ? & \cellcolor{spence}\hyperref[spence]{NDS\(^1\)} & ? & ? & ? & \(\approx10^9\) & ? & ? & ? & \cellcolor{spence}\hyperref[spence]{NDS\(^1\)} & ? & ? & ? & \(\approx10^{13}\) & ? & ? & ? & \cellcolor{spence}\hyperref[spence]{NDS\(^1\)} & ? & ? & ? & \(\approx10^{17}\)\\
    \cline{2-26}
    & 12 & \cellcolor{new}\hyperref[thm:k2nk]{NDS*} & ? & ? & \cellcolor{spence}\hyperref[spence]{NDS\(^1\)} & ? & ? & ? & \(\approx10^{10}\) & ? & ? & ? & \cellcolor{spence}\hyperref[spence]{NDS\(^1\)} & ? & ? & ? & \(\approx10^{15}\) & ? & ? & ? & \cellcolor{spence}\hyperref[spence]{NDS\(^1\)} & ? & ? & ? & \(\approx10^{19}\)\\
    \cline{2-26}
    & 13 & \cellcolor{new}\hyperref[thm:k2nk]{NDS*} & ? & ? & \cellcolor{spence}\hyperref[spence]{NDS\(^1\)} & ? & ? & ? & \(\approx10^{11}\) & ? & ? & ? & \cellcolor{spence}\hyperref[spence]{NDS\(^1\)} & ? & ? & ? & \(\approx10^{17}\) & ? & ? & ? & \cellcolor{spence}\hyperref[spence]{NDS\(^1\)} & ? & ? & ? & \(\approx10^{22}\)\\
    \hline
    \end{tabular}}
    \caption{Cospectrality of small generalized Grassmann graphs with \(k=4\) and \(q=2,3,4\).}
    \label{tab:grassmann4}
\end{table}
    \vfill
\end{landscape}


\begin{thebibliography}{99}





\bibitem{chang} L. Chang, The uniqueness and nonuniqueness of triangular association schemes, 
{\it Science Record} {\bf 3} (1959), 604--613.

\bibitem{cioba} S.M. Cioab{\u{a}}, W.H. Haemers, T. Johnston and M. McGinnis, Cospectral mates for the union of some classes in the Johnson association scheme,
{\it Linear Algebra and its Applications} {\bf 539} (2018), 219--228.

\bibitem{spence} E.R. van Dam, W.H. Haemers, J.H. Koolen and E. Spence, Characterizing distance-regularity of graphs by the spectrum,
{\it Journal of Combinatorial Theory, Series A} {\bf 113} (2006), 1805--1820.


\bibitem{koolen} E.R. van Dam and J.H. Koolen, A new family of distance-regular graphs with unbounded diameter,
{\it Inventiones Mathematicae} {\bf 162} (2005), 189--193.


\bibitem{gm} C.D. Godsil and B.D. McKay, Constructing cospectral graphs,
{\it Aequationes Mathematicae} {\bf 25} (1982), 257--268.

\bibitem{ramezani} W.H. Haemers and F. Ramezani, Graphs cospectral with Kneser graphs,
{\it Graphs and Combinatorics} {\bf 531} (2010), 159--164.

\bibitem{hoffman} A.J. Hoffman, On the uniqueness of the triangular association scheme,
{\it The Annals of Mathematical Statistics} {\bf 31} (1960), 492--497.

\bibitem{liu} T. Huang and C. Liu, Spectral characterization of some generalized Odd graphs, {\it Graphs and Combinatorics} {\bf 15} (1999), 195--209.

\bibitem{ihringer} F. Ihringer and A. Munemasa, New strongly regular graphs from finite geometries via switching, {\it Linear Algebra and its Applications} {\bf 580} (2019), 464--474.
    

\bibitem{QJW2020} L. Qiu, Y. Ji, W. Wang, On a theorem of Godsil and McKay concerning 
the construction of cospectral graphs, \emph{Linear Algebra and its Applications} 
{\bf 603} (2020), 265--274.

\bibitem{wqh} W. Wang, L. Qiu and Y. Hu, Cospectral graphs, GM-switching and regular rational orthogonal matrices of level $p$,
{\it Linear Algebra and its Applications} {\bf 563} (2019), 154--177.










\end{thebibliography}
\end{document}